\numberwithin{equation}{section}
\newtheorem{theorem}[equation]{Theorem}
\newtheorem{proposition}[equation]{Proposition}
\newtheorem{lemma}[equation]{Lemma}
\newtheorem{corollary}[equation]{Corollary}
\newtheorem{problem}[equation]{Problem}
\theoremstyle{definition}
\newtheorem{rmk}[equation]{Remark}
\newenvironment{remark}[1][]{%
    \begin{rmk}[#1] \pushQED{\qed}}{\popQED \end{rmk}}
\newtheorem{eg}[equation]{Example}
\newenvironment{example}[1][]{%
    \begin{eg}[#1] \pushQED{\qed}}{\popQED \end{eg}}
\newtheorem{definition}[equation]{Definition}
\numberwithin{equation}{section}
\newcommand{\eps}{\varepsilon}
\renewcommand{\phi}{\varphi}
\newcommand{\CC}{\mathcal{C}}
\newcommand{\FF}{\mathcal{F}}
\newcommand{\LL}{\mathcal{L}}
\newcommand{\OO}{\mathcal{O}}
\newcommand{\RR}{\mathcal{R}}
\renewcommand{\ss}{\mathfrak{S}}
\newcommand{\Z}{\mathbf{Z}}
\newcommand{\Q}{\mathbf{Q}}
\newcommand{\Sym}{\mathrm{Sym}}
\newcommand{\D}{\mathrm{D}}
\newcommand{\Hs}{\mathrm{H}}
\DeclareMathOperator{\gr}{gr}
\newcommand{\GL}{\mathbf{GL}}
\renewcommand{\hom}{\operatorname{Hom}}
\newcommand{\rank}{\operatorname{rank}}
\newcommand{\surj}{\twoheadrightarrow}
\renewcommand{\b}{\mathfrak{b}}
\newcommand{\h}{\mathfrak{h}}
\newcommand{\gl}{\mathfrak{gl}}
\newcommand{\ch}{\operatorname{ch}}
\newcommand{\sch}{\operatorname{sch}}
\newcommand{\Schub}{\mathscr{S}}
\newcommand{\row}{\mathrm{Row}}
\DeclareMathOperator{\depth}{depth}
\DeclareMathOperator{\Spec}{Spec}
\DeclareMathOperator{\ann}{ann}
\DeclareMathOperator{\coker}{coker}
\DeclareMathOperator{\codim}{codim}
\DeclareMathOperator{\supp}{supp}
\DeclareMathOperator{\length}{length}
\newcommand{\Flag}{{\bf Flag}}
\title{Schubert complexes and degeneracy loci}
\author{Steven V Sam}
\date{April 28, 2010}
\begin{document}

\maketitle

\begin{abstract} Given a generic map between flagged vector bundles on
  a Cohen--Macaulay variety, we construct maximal Cohen--Macaulay
  modules with linear resolutions supported on the Schubert-type
  degeneracy loci. The linear resolution is provided by the Schubert
  complex, which is the main tool introduced and studied in this
  paper. These complexes extend the Schubert functors of Kra\'skiewicz
  and Pragacz, and were motivated by the fact that Schur complexes
  resolve maximal Cohen--Macaulay modules supported on determinantal
  varieties. The resulting formula in K-theory provides a ``linear
  approximation'' of the structure sheaf of the degeneracy locus,
  which can be used to recover a formula due to Fulton.
\end{abstract}


\section*{Introduction.}

Let $X$ be an equidimensional Cohen--Macaulay (e.g., nonsingular)
variety, and let $\phi \colon E \to F$ be a map of vector bundles over
$X$, with ranks $e$ and $f$ respectively. Given a number $k \le
\min(e,f)$, let $D_k(\phi)$ be the degeneracy locus of points $x$
where the rank of $\phi$ restricted to the fiber of $x$ is at most
$k$. Then $\codim D_k(\phi) \le (e-k)(f-k)$, and in the case of
equality, the Thom--Porteous formula expresses the homology class of
$D_k(\phi)$ as an evaluation of a multi-Schur function at the Chern
classes of $E$ and $F$ (see \cite[\S 3.5.4]{manivel}). Also in the
case of equality, the Schur complex associated with the rectangular
partition $(f-k) \times (e-k)$ (see \cite{ABW} or \cite[\S
2.4]{weyman} for more about Schur complexes) of $\phi$ is a linear
locally free resolution for a Cohen--Macaulay coherent sheaf whose
support is $D_k(\phi)$. This resolution gives a formula in the
K-theory of $X$. In the case that $X$ is smooth, there is an
isomorphism from an associated graded of the K-theory of $X$ to the
Chow ring of $X$ (see Section~\ref{section:ktheory} for more
details). Then the image of this complex recovers the Thom--Porteous
formula, and the complex provides a ``linear approximation'' of the
syzygies of $D_k(\phi)$.

The situation was generalized by Fulton \cite{flags} as follows. We
provide the additional data of a flag of subbundles $E_\bullet$ for
$E$ and a flag of quotient bundles $F_\bullet$ for $F$, and we can
define degeneracy loci for an array of numbers which specifies the
ranks of the restriction maps $E_p \to F_q$. The rank functions that
give rise to irreducible degeneracy loci are indexed by permutations
in a natural way. Under the right codimension assumptions, one can
express the homology class of a given degeneracy locus as a
substitution of a double Schubert polynomial with the Chern classes of
the quotients $E_i / E_{i-1}$ and the kernels $\ker(F_j \to
F_{j-1})$. The motivation for this work was to complete the analogy of
this situation with the previous one by constructing ``Schubert
complexes'' which would be acyclic whenever the degeneracy loci has
the right codimension.

Building on the constructions for Schubert functors by Kra\'skiewicz
and Pragacz of \cite{schubertfunctor}, we construct these complexes
over an arbitrary (commutative) ring $R$ from the data of two free
$R$-modules $M_0$, $M_1$, with given flags of submodules,
respectively, quotient modules, and a map $\partial \colon M_0 \to
M_1$. We can also extend the construction to an arbitrary scheme. We
show that they are acyclic when a certain ideal defined in terms of
minors of $\partial$ has the right depth, i.e., they are
``depth-sensitive.''  Our main result is that in the situation of
Fulton's theorem, the complex is acyclic and the Euler characteristic
provides the formula in the same sense as above. Our proof uses
techniques from commutative algebra, algebraic geometry, and
combinatorics. Again, the complexes are linear and provide a ``linear
approximation'' to the syzygies of Fulton's degeneracy loci.  As a
special case of Fulton's degeneracy loci, one gets Schubert varieties
inside of (type A) partial flag varieties.

Using the work of Fomin, Greene, Reiner, and Shimozono \cite{fomin},
we construct explicit bases for the Schubert complex in the case that
$M_0$ and $M_1$ are free. This basis naturally extends their notion of
balanced labelings and the generating function of the basis elements
gives what seems to be a new combinatorial expression for double
Schubert polynomials. Furthermore, the complex naturally affords a
representation of the Lie superalgebra of upper triangular matrices
(with respect to the given flags) in $\hom(M_0, M_1)$, and its graded
character is the double Schubert polynomial.

The article is structured as follows. In Section~\ref{bslsection} we
recall some facts about double Schubert polynomials and balanced
labelings. We introduce balanced super labelings (BSLs) and prove some
of their properties. In Section~\ref{linearalgebrasection} we extend
the construction for Schubert functors to the $\Z/2$-graded setting
and show that they have a basis naturally indexed by the BSLs. In
Section~\ref{complexsection} we construct the Schubert complex from
this $\Z/2$-graded Schubert functor. Using some facts about the
geometry of flag varieties, we show that the acyclicity of these
complexes is controlled by the depth of a Schubert determinantal
ideal. In the case of acyclicity and when the coefficient ring is
Cohen--Macaulay, we show that the cokernel of the complex is a
Cohen--Macaulay module which is generically a line bundle on its
support. We also give some examples of Schubert complexes. Finally, in
Section~\ref{locisection}, we relate the acyclicity of the Schubert
complexes to a degeneracy locus formula of Fulton. We finish with some
remarks and possible future directions.

\subsection*{Conventions.} 

The letter $K$ is reserved for a field of arbitrary characteristic. If
$X$ is a scheme, then $\OO_X$ denotes the structure sheaf of
$X$. Throughout, all schemes are assumed to be separated. A variety
means a reduced scheme which is of finite type over $K$. We treat the
notions of locally free sheaves and vector bundles as the same, and
points will always refer to closed points. The fiber of a vector
bundle $E$ at a point $x \in X$ is denoted $E(x)$ and refers to the
stalk $E_x$ tensored with the residue field $k(x)$.  Given a line
bundle $L$ on $X$, $c_1(L)$ denotes the first Chern class of $L$,
which we think of as a degree $-1$ endomorphism of the Chow groups
${\rm A}_*(X)$. For an element $\alpha \in {\rm A}_*(X)$, and an
endomorphism $c$ of ${\rm A}_*(X)$, we will use the notation $c \cap
\alpha$ to denote $c$ applied to $\alpha$.

\subsection*{Acknowledgements.}

The author thanks Jerzy Weyman for helpful discussions and for
pointing out the connection between Schur complexes and the
Thom--Porteous formula which was the starting point for this work, and
also for reading previous drafts of this paper. We also thank an
anonymous referee for some comments on a previous draft. The author
was supported by an NSF graduate research fellowship and an NDSEG
fellowship while this work was done.

\section{Double Schubert polynomials.} \label{bslsection}

\subsection{Preliminaries.} \label{section:preliminaries}

Let $\Sigma_n$ be the permutation group on the set
$\{1,\dots,n\}$. Since we are thinking of $\Sigma_n$ as a group of
functions, we will multiply them as functions, e.g., if $s_1$ and
$s_2$ are the transpositions that switch 1 and 2, and 2 and 3,
respectively, then $s_1s_2$ is the permutation $1 \mapsto 2$, $2
\mapsto 3$, $3 \mapsto 1$. We will use inline notation for
permutations, so that $w$ is written as $w(1) w(2) \cdots
w(n)$. Proofs for the following statements about $\Sigma_n$ can be
found in \cite[\S 2.1]{manivel}. Let $s_i$ denote the transposition
which switches $i$ and $i+1$. Then $\Sigma_n$ is generated by
$\{s_1,\dots,s_{n-1}\}$, and for $w \in \Sigma_n$, we define the {\bf
  length} of $w$ to be the least number $\ell(w)$ such that $w =
s_{i_1} \cdots s_{i_{\ell(w)}}$. Such a minimal expression is a {\bf
  reduced decomposition} for $w$. All reduced expressions can be
obtained from one another using only the {\bf braid relations}:
$s_is_j = s_js_i$ for $|i-j|>1$ and $s_is_{i+1}s_i =
s_{i+1}s_is_{i+1}$. We can also write $\ell(w) = \# \{i<j \mid w(i) >
w(j)\}$. The {\bf long word} $w_0$ is the unique word with maximal
length, and is defined by $w_0(i) = n+1-i$.

We will use two partial orders on $\Sigma_n$. The {\bf (left) weak
  Bruhat order}, denoted by $u \le_W w$, holds if some reduced
decomposition of $u$ is the suffix of some reduced decomposition of
$w$.\footnote{In \cite{manivel}, the weak Bruhat order is defined in
  terms of prefixes. We point out that these two definitions are
  distinct, but this will not cause any problems.} We denote the {\bf
  strong Bruhat order} by $u \le w$, which holds if some reduced
decomposition of $w$ contains a subword that is a reduced
decomposition of $u$. It follows from the definition that $u \le w$ if
and only if $u^{-1} \le w^{-1}$. For a permutation $w$, let $r_w(p,q)
= \#\{i \le p \mid w(i) \le q \}$ be its {\bf rank function}. Then $u
\le w$ if and only if $r_u(p,q) \ge r_w(p,q)$ for all $p$ and $q$ (the
inequality on rank functions is reversed).

Given a polynomial (with arbitrary coefficient ring) in the variables
$\{x_i\}_{i \ge 1}$, let $\partial_i$ be the {\bf divided difference
  operator}
\begin{align} \label{divideddiff} (\partial_iP)(x_1, x_2, \dots) =
  \frac{P(\dots, x_{i-1}, x_i, x_{i+1}, \dots) - P(\dots, x_{i-1},
    x_{i+1}, x_i, \dots)}{x_i - x_{i+1}}.
\end{align}
The operators $\partial_i$ satisfy the braid relations:
$\partial_i\partial_j = \partial_j\partial_i$ when $|i-j| > 1$ and
$\partial_i \partial_{i+1} \partial_i
= \partial_{i+1} \partial_i \partial_{i+1}$.

For the long word $w_0 \in \Sigma_n$, set $\ss_{w_0}(x,y) = \prod_{i +
  j \le n} (x_i - y_j)$. In general, if $\ell(ws_i) = \ell(w) - 1$, we
set $\ss_{ws_i}(x,y) = \partial_i \ss_w(x,y)$, where we interpret
$\ss_w(x,y)$ as a polynomial in the variables $\{x_i\}_{i \ge 1}$ with
coefficients in the ring $\Z[y_1,y_2,\dots]$. These polynomials are
the {\bf double Schubert polynomials}, and are well-defined since the
$\partial_i$ satisfy the braid relations and the braid relations
connect all reduced decompositions of a permutation. The definition of
these polynomials is due to Lascoux and Sch\"utzenberger
\cite{ls}. They enjoy the following stability property: if we embed
$\Sigma_n$ into $\Sigma_{n+m}$ by identifying permutations of
$\Sigma_n$ with permutations of $\Sigma_{n+m}$ which pointwise fix
$\{n+1, n+2, \dots, n+m\}$, then the polynomial $\ss_w(x,y)$ is the
same whether we regard $w$ as an element of $\Sigma_n$ or
$\Sigma_{n+m}$ \cite[Corollary 2.4.5]{manivel}.

Define the {\bf single Schubert polynomials} by $\ss_w(x) =
\ss_w(x,0)$. We will use the identity \cite[Proposition
2.4.7]{manivel}
\begin{align} \label{induction} \ss_w(x,y) = \sum_{u \le_W w} \ss_u(x)
  \ss_{uw^{-1}}(-y).
\end{align}

\subsection{Balanced super labelings.}

For the rest of this article, we fix a totally ordered alphabet
$\cdots < 3' < 2' < 1' < 1 < 2 < 3 < \cdots$. The elements $i'$ will
be referred to as {\bf marked} and the elements $i$ will be referred
to as {\bf unmarked}. 

For a permutation $w$, define its {\bf diagram} $D(w) = \{(i, w(j))
\mid i < j,\ w(i) > w(j) \}$. Note that $\# D(w) = \ell(w)$. Our
convention is that the box $(i,j)$ means row number $i$ going from top
to bottom, column number $j$ going from left to right, just as with
matrix indexing. An alternative way to get the diagram of $D(w)$ is as
follows: for each $i$, remove all boxes to the right of $(i,w(i))$ in
the same row and all boxes below $(i,w(i))$ in the same column
including $(i,w(i))$. The complement is $D(w)$. See
Figure~\ref{figure:permdiagram} for an example with $w = 35142$. Here
the boxes $(i,w(i))$ are marked with $\bullet$ and the other removed
boxes are marked with $\times$.
\begin{figure}
\[
  {\tableau[scY]{, , \bullet, \times, \times | , , \times, , \bullet |
    \bullet, \times, \times, \times, \times | \times, , \times,
    \bullet, \times | \times, \bullet, \times, \times, \times }}
\]
\caption{$D(35142)$}
\label{figure:permdiagram}
\end{figure}

Let $T$ be a labeling of $D(w)$. The {\bf hook} of a box $b \in D(w)$
is the set of boxes in the same column below it, and the set of boxes
in the same row to the right of it (including itself). A hook is {\bf
  balanced} (with respect to $T$) if it satisfies the following
property: when the entries are rearranged so that they are weakly
increasing going from the top right end to the bottom left end, the
label in the corner stays the same. A labeling is {\bf balanced} if
all of the hooks are balanced. Call a labeling $T$ of $D(w)$ with
entries in our alphabet a {\bf balanced super labeling (BSL)} if it is
balanced, column-strict (no repetitions in any column) with respect to
the unmarked alphabet, row-strict with respect to the marked alphabet,
and satisfies $j' \le T(i,j) \le i$ for all $i$ and $j$ (this last
condition will be referred to as the {\bf flag conditions}). To be
consistent with the identity permutation, we say that an empty diagram
has exactly one labeling.

\begin{example} We list the BSL for some long words.

$n=3$, $\ss_{321}(x,y) = (x_1-y_1)(x_1-y_2)(x_2-y_1)$
\[ { \tableau[scY]{1,1|2} } \quad { \tableau[scY]{1,1|1'}} \quad {
  \tableau[scY]{1,2'|2}} \quad { \tableau[scY]{1',2'|1}} \quad {
  \tableau[scY]{1,1'|2}} \quad { \tableau[scY]{1',1|1'}} \quad {
  \tableau[scY]{1',2'|2}} \quad { \tableau[scY]{1',2'|1'}}
\]

$n=4$, $\ss_{4321}(x,y) = (x_1-y_1) (x_1-y_2)(x_2-y_1)
(x_1-y_3)(x_2-y_2)(x_3-y_1)$.
\[
\begin{array}{llllllll}
  { \tableau[scY]{1,1,1 | 2,2 | 3} } \quad 
  { \tableau[scY]{1,1,1 | 2,2 | 1'}} \quad 
  { \tableau[scY]{1,1,1|2,2'|3}} \quad 
  { \tableau[scY]{1,1,1|1',2'|2}} \quad 
  { \tableau[scY]{1,1,3'|2,2|3}} \quad
  { \tableau[scY]{1,1,3'|2,2|1'}} \quad
  { \tableau[scY]{1,2',3'|2,1|3}} \quad
  { \tableau[scY]{ 1',2',3'|1,1|2}}
\end{array}
\]
\[
\begin{array}{llllllll}
  { \tableau[scY]{1,1,1|2,1'|3}} \quad 
  { \tableau[scY]{1,1,1|1',2|1'}} \quad 
  { \tableau[scY]{1,1,1|1', 2'|3}} \quad
  { \tableau[scY]{1,1,1|1',2'|1'}} \quad 
  { \tableau[scY]{1,1',3'|2,1|3}} \quad 
  { \tableau[scY]{ 1',1,3'|1,2|1'}} \quad
  { \tableau[scY]{ 1',2',3'|1,1|3}} \quad
  { \tableau[scY]{ 1',2',3'|1,1|1'}}
\end{array}
\]
\[
\begin{array}{llllllll}
  { \tableau[scY]{1,1,2'|2,2|3}} \quad 
  { \tableau[scY]{1,1,2'|2,2|1'}} \quad 
  { \tableau[scY]{1,2',1|2,2'|3}} \quad
  { \tableau[scY]{1,2',1|1',2'|2}} \quad
  { \tableau[scY]{1,2',3'|2,2|3}} \quad
  { \tableau[scY]{1',2',3'|2,2|1}} \quad
  { \tableau[scY]{1,2',3'|2,2'|3}} \quad
  { \tableau[scY]{ 1',2',3'|2,2'|1}}
\end{array}
\]
\[
\begin{array}{llllllll}
  { \tableau[scY]{1,1',2'|2,1|3}} \quad 
  { \tableau[scY]{1,1',1|1',2'|2}} \quad
  { \tableau[scY]{1,2',1|1',2'|3}} \quad
  { \tableau[scY]{ 1',2',1|1',2'|1}} \quad 
  { \tableau[scY]{ 1',2',3'|2,1|3}} \quad 
  { \tableau[scY]{ 1',2',3'|1,2|1'}} \quad
  { \tableau[scY]{ 1',2',3'|1,2'|3}} \quad
  { \tableau[scY]{ 1',2',3'|1',2'|1}}
\end{array}
\]
\[
\begin{array}{llllllll}
  { \tableau[scY]{1,1,1'|2,2|3}} \quad 
  { \tableau[scY]{1,1,1'|2,2|1'}} \quad 
  { \tableau[scY]{1,1',1|2,2'|3}} \quad 
  { \tableau[scY]{ 1',1,2'|1,2|1'}} \quad
  { \tableau[scY]{1,1',3'|2,2|3}} \quad
  { \tableau[scY]{ 1',1,3'|2,2|1'}} \quad 
  { \tableau[scY]{1,2',3'|2,1'|3}} \quad 
  { \tableau[scY]{ 1',2',3'|1,1'|2}}
\end{array}
\]
\[
\begin{array}{llllllll}
  { \tableau[scY]{1,1',1|2,1'|3}} \quad
  { \tableau[scY]{ 1',1,1|1',2|1'}} \quad
  { \tableau[scY]{1,1',1|1',2'|3}} \quad
  { \tableau[scY]{ 1',1,1|1',2'|1'}} \quad
  { \tableau[scY]{1,1',3'|2,1'|3}} \quad
  { \tableau[scY]{ 1',1,3'|1',2|1'}} \quad
  { \tableau[scY]{ 1',2',3'|1,1'|3}} \quad
  { \tableau[scY]{ 1',2',3'|1',1|1'}}
\end{array}
\]
\[
\begin{array}{llllllll}
  { \tableau[scY]{1,1',2'|2,2|3}} \quad
  { \tableau[scY]{ 1',1,2'|2,2|1'}} \quad
  { \tableau[scY]{1,1',2'|2,2'|3}} \quad
  { \tableau[scY]{ 1',2',1|1',2'|2}} \quad
  { \tableau[scY]{ 1',2',3'|2,2|3}} \quad
  { \tableau[scY]{ 1',2',3'|2,2|1'}} \quad
  { \tableau[scY]{ 1',2',3'|2,2'|3}} \quad
  { \tableau[scY]{ 1',2',3'|1',2'|2}}
\end{array}
\]
\[
\begin{array}{llllllll}
  { \tableau[scY]{1,1',2'|2,1'|3}} \quad
  { \tableau[scY]{ 1',1,2'|1',2|1'}} \quad
  { \tableau[scY]{ 1',2',1|1',2'|3}} \quad
  { \tableau[scY]{ 1',2',1|1',2'|1'}} \quad
  { \tableau[scY]{ 1',2',3'|2,1'|3}} \quad
  { \tableau[scY]{ 1',2',3'|1',2|1'}} \quad
  { \tableau[scY]{ 1',2',3'|1',2'|3}} \quad
  { \tableau[scY]{1',2',3' | 1',2' | 1'}}
\end{array} \qedhere
\]
\end{example}

Let $A = (a_{i,j})$ be an $n \times n$ array. We define left and right
actions of $\Sigma_n$ on $A$ as follows. For $w \in \Sigma_n$, set
$(wA)_{i,j} = A_{i,w(j)}$, and $(Aw)_{i,j} =
A_{w^{-1}(i),j}$. Equivalently, $Aw = (w^{-1}A^t)^t$ where $t$ denotes
transpose. In particular, if $A = D(w)$ is the diagram of a
permutation, and $\ell(wu) = \ell(w) + \ell(u)$, then $D(w)u \subseteq
D(wu)$. It is enough to check this when $u = s_i$ is a
transposition. In this case, the condition $\ell(ws_i) = \ell(w) + 1$
means that $w(i) < w(i+1)$, and then $D(ws_i) = D(w)s_i \cup
\{(i,w(i))\}$. Similarly, $wD(u) \subseteq D(wu)$.

If $w$ is a permutation, then $(i,j) \in D(w)$ is a {\bf border cell}
if $w(i+1) = j$. In particular, if $(i,j)$ is a border cell, then
$w(i) > w(i+1)$, so $(D(w) \setminus (i,j))s_i = D(ws_i)$.

\begin{lemma} \label{removal} Let $T$ be a labeling of $D(w)$ with
  largest label $M$.
  \begin{compactenum}[\rm (a)]
  \item \label{parta} Suppose $(i,j)$ is a border cell which contains
    $M$. Then $T$ is balanced if and only if $(T \setminus (i,j))s_i$
    is balanced.
  \item \label{partb} Suppose $T$ is a BSL and $M$ is unmarked. Then
    every row which contains $M$ must contain an $M$ in a border cell.
  \end{compactenum}
\end{lemma}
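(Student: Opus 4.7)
The plan is to prove part (b) by a direct balance argument applied to the rightmost $M$ in row $r$, and to prove part (a) by a hook-by-hook comparison between $T$ and $T' = (T \setminus (i,j))s_i$ using the row-swap bijection $\sigma \colon D(w) \setminus (i,j) \to D(ws_i)$.

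For part (b), I take $(r, k_0)$ to be the rightmost cell in row $r$ carrying the label $M$. Column-strictness of unmarked labels prevents another unmarked $M$ in column $k_0$; since all marked labels are strictly less than the unmarked $M$ in the given order, the only occurrence of $M$ in the hook of $(r, k_0)$ is at the corner. Balance then forces the corner to sit at the bottom-left end of the hook, so the column-below is empty and $(r, k_0)$ is the bottom of column $k_0$ in $D(w)$. If $(r, k_0)$ is already a border cell, we are done. Otherwise $(r+1, k_0) \notin D(w)$ combined with $k_0 \neq w(r+1)$ and $(r, k_0) \in D(w)$ forces $w(r+1) < k_0 < w(r)$, so $(r, w(r+1))$ is a border cell in row $r$. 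Its hook is row-only (a border cell is the bottom of its column) and contains $(r, k_0)$ labeled $M$, so balance of this row-only hook forces the corner label $T(r, w(r+1))$ to equal $M$.

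For part (a), the hook of $(i, j)$ in $T$ is balanced trivially: $(i, j)$ is the bottom of its column, so the hook lies entirely in row $i$ with the corner at the bottom-left end carrying the maximum label $M$. For each other cell $c \in D(w)\setminus\{(i,j)\}$, I compare the hook of $c$ in $T$ with the hook of $\sigma(c)$ in $T'$. If $c$ lies outside rows $i, i+1$ and outside column $j$, the hooks have identical label multisets and corner positions, because the row-swap restricted to any column $k \neq j$ in rows $i, i+1$ preserves the label multiset. If $c = (p, j)$ with $p < i$, then the hook in $T$ has the extra cell $(i, j)$ labeled $M$ at the bottom of its column-below, and removing this maximum does not change the sorted value at the corner position. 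If $c = (i, k)$ with $k > j$, then the hook of $c$ in $T$ matches the hook of $\sigma(c) = (i+1, k)$ in $T'$ directly in multiset and corner position.

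The delicate case is when $c$ lies in row $i$ with $k < j$ or in row $i+1$. For each column $k < j$ such that both $(i, k), (i+1, k)$ lie in $D(w)$, one pairs the hooks at these two cells. The hook at $(i, k)$ in $T$ contains $(i, j)$ labeled $M$ in its row-right and $(i+1, k)$ labeled $v = T(i+1, k)$ in its column-below, while the hook at $(i+1, k)$ in $T$ has corner $v$; in $T'$ the labels at $(i, k), (i+1, k)$ are swapped and $(i, j)$ is absent, modifying both hooks correspondingly. A direct multiset computation shows that the joint balance of the two hooks in $T$ is equivalent to the joint balance of the two corresponding hooks in $T'$. The main obstacle is organizing this paired multiset analysis: the four hooks involved have distinct multisets and corner positions, and their balance conditions must be tracked simultaneously to verify the equivalence.
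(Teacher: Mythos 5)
First, a point of comparison: the paper does not prove this lemma at all --- it simply cites \cite{fomin} (Theorem 4.8 for part (a), Lemma 4.7 for part (b)) --- so you are attempting something the paper defers to a reference. Your argument for part (b) is correct and complete: column-strictness and the maximality of $M$ force the rightmost $M$ in row $r$ to be the unique $M$ in its hook, balance then forces its column-arm to be empty, and the case analysis showing that either that cell is already a border cell or else $w(r+1)<k_0<w(r)$, so that $(r,w(r+1))\in D(w)$ is a border cell whose row-only hook forces $T(r,w(r+1))=M$, all checks out. The easy cases of your hook comparison in part (a) (cells outside rows $i,i+1$; cells in column $j$ above row $i$, where only the terminal maximum $M$ is deleted from the path; and cells $(i,k)$ with $k>j$, using that row $i+1$ of $D(w)$ has no cells in columns $\ge j$) are also correct.

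The gap is exactly at the step you flag as ``the main obstacle'': the column-by-column equivalence in the delicate case is \emph{false}, in both directions, so no multiset computation will verify it. Take $w=43125$, so $D(w)=\{(1,1),(1,2),(1,3),(2,1),(2,2)\}$ with border cell $(1,3)$, and set $T(1,1)=4$, $T(1,2)=3$, $T(1,3)=M=10$, $T(2,1)=4$, $T(2,2)=5$. At column $k=1$, the hook of $(1,1)$ in $T$ is balanced (path labels $10,3,4,4$ sort to $3,4,4,10$, leaving $4$ at the corner) while the hook of $(2,1)$ is not ($5,4$ sorts to $4,5$); yet in $T'$, where $T'(1,1)=4$, $T'(1,2)=5$, $T'(2,1)=4$, $T'(2,2)=3$, \emph{both} hooks at column $1$ are balanced. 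The lemma itself is not contradicted --- $T'$ fails at the hook of $(1,2)$ --- but the failure has migrated to a different column, which is precisely what a pairing of hooks within a single column cannot detect. The forward direction fails too: with row-arm labels $\{1,M\}$ in row $i$ to the right of column $k$, corner $u=4$, $v=5$, and a single cell labeled $2$ below in column $k$, both hooks at column $k$ are balanced in $T$ while the hook of $(i+1,k)$ in $T'$ is not (the global imbalance of $T$ is then visible only at the intermediate column carrying the label $1$). So the balance conditions in rows $i$ and $i+1$ genuinely interact across columns, and the argument must be organized globally over the two affected rows, as in \cite[Theorem 4.8]{fomin}, rather than one vertical domino at a time.
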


\begin{proof} See \cite[Theorem 4.8]{fomin} for \eqref{parta}. Part
  \eqref{partb} follows from \cite[Lemma 4.7]{fomin} \end{proof}

By convention, a BSL of $D(w)$ is an $n \times n$ array which is 0
outside of $D(w)$ and takes values in our alphabet otherwise. We use
the convention that $0+i = i+0 = i$ and $0+i' = i'+0 = i'$ whenever
$i,i'$ is in our alphabet, and also that $1' < 0 < 1$.

\begin{lemma} \label{factorlemma} Let $u$ and $v$ be two permutations
  such that $\ell(uv) = \ell(u) + \ell(v)$. Let $T_u$ be a BSL of
  $D(u)$ using only marked letters, and let $T_v$ be a BSL of $D(v)$
  using only unmarked letters. Then $T = T_uv + uT_v$ is a BSL for
  $D(uv)$, and all BSLs of $w = uv$ come from such a
  ``factorization'' in a unique way.
\end{lemma}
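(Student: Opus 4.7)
The proof breaks into three steps: a diagram-level decomposition, structural and balance checks on $T$, and uniqueness of the factorization. First, I establish $D(uv) = D(u)v \sqcup uD(v)$ as a disjoint union. Both inclusions are recorded in Section \ref{section:preliminaries} under the length-additivity hypothesis; combined with the cardinality count $|D(u)v| + |uD(v)| = \ell(u) + \ell(v) = \ell(uv) = |D(uv)|$, they force the partition. In particular the marked part $T_u v$ and the unmarked part $uT_v$ have disjoint supports ($D(u)v$ and $uD(v)$ respectively) that together fill $D(uv)$, so $T = T_u v + uT_v$ is a well-defined labeling of $D(uv)$. Row-strictness of marked letters is inherited from $T_u$ since right multiplication by $v$ reindexes whole rows, and column-strictness of unmarked letters is dual for left multiplication by $u$. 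For the flag conditions at a marked entry $T(i,j) = k'$ coming from $T_u(i',j)$ with $(i,j) \in D(u)v$, the bound $k \le j$ in $T_u$ transfers to $j' \le k' = T(i,j)$, while $k' \le i$ is automatic since every marked letter lies below $1$; the unmarked case is symmetric.

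The technical heart of the argument is showing that $T$ is balanced, which I would prove by induction on $\ell(uv)$ using Lemma \ref{removal}. Let $M$ be the largest label in $T$. If $M$ is unmarked, Lemma \ref{removal}(b) furnishes a border cell $(i_0, j_0) \in D(uv)$ containing $M$; since $M$ is unmarked, this cell lies in $uD(v)$, corresponding to some $(i_0, u(j_0)) \in D(v)$. I would then verify that $(i_0, u(j_0))$ is itself a border cell of $D(v)$ — the border-cell condition involves only row indices, which are untouched by left multiplication by $u$ — which forces $\ell(vs_{i_0}) = \ell(v) - 1$ and $uvs_{i_0} = u(vs_{i_0})$ with length-additivity. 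By Lemma \ref{removal}(a), balance of $T$ then reduces to balance of $(T \setminus (i_0, j_0))s_{i_0}$, which factors as $T_u(vs_{i_0}) + uT_v'$ with $T_v' = (T_v \setminus (i_0, u(j_0)))s_{i_0}$ a BSL of $D(vs_{i_0})$; the inductive hypothesis then applies. The case where $M$ is marked is handled by the dual argument, removing a border cell from $D(u)v$ and reducing on the $u$-side via a marked analogue of Lemma \ref{removal}(b) obtained by symmetry. The main technical hurdle here is ensuring that the border cell supplied by the removal lemma always lies in the expected half of the decomposition, so that the length-decreasing transposition $s_{i_0}$ acts on the correct factor and the factorization descends intact.

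For uniqueness, the marked and unmarked parts of $T$ are determined by $T$ itself, so the restrictions $T|_{D(u)v}$ and $T|_{uD(v)}$ pin down $T_u = (T|_{D(u)v})v^{-1}$ and $T_v = u^{-1}(T|_{uD(v)})$ via the disjoint decomposition of the first step.
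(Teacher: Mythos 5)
Your overall strategy---reducing balancedness to a smaller diagram by removing a border cell carrying the maximal label, via Lemma~\ref{removal}---is the same as the paper's, and your preliminary steps (the disjoint decomposition $D(uv) = D(u)v \sqcup uD(v)$, strictness, and the flag conditions) are sound. But the central step of your balance argument is circular: to locate a border cell of $D(uv)$ containing the maximal label $M$, you invoke Lemma~\ref{removal}\eqref{partb}, whose hypothesis is that the labeling is a BSL---in particular, already balanced---which is precisely what you are trying to prove. (The hypothesis is not removable: one can write down column-strict, row-strict, flagged but unbalanced labelings whose maximal label sits in no border cell of its row.) The repair is to apply Lemma~\ref{removal}\eqref{partb} to $T_v$, which \emph{is} a BSL of $D(v)$ by hypothesis, obtain a border cell of $D(v)$ containing $M$, and only then transport it to a border cell of $D(uv)$ using the compatibility you already note (the left $u$-action preserves rows and sends the column $v(i_0+1)$ to $uv(i_0+1)$); after that, Lemma~\ref{removal}\eqref{parta}---which is an unconditional ``if and only if'' for arbitrary labelings---lets you descend. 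This is exactly what the paper does, run in the opposite direction: it first strips $T_v$ down to the empty labeling by repeated application of Lemma~\ref{removal} to $T_v$ itself, recording the transpositions and the weakly increasing removed labels $L_1 \le \cdots \le L_{\ell(v)}$, and then rebuilds $T$ from $T_u$ by re-inserting these cells; since marked letters are smaller than unmarked ones and the $L_j$ increase, the inserted label is maximal at every stage, so part \eqref{parta} applies each time.

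Two smaller points. First, your case ``$M$ is marked'' is vacuous: the largest label of $T$ is marked only if $T$ contains no unmarked letters at all, i.e.\ $v = 1$ and $T = T_u$, which is a BSL by hypothesis; no marked analogue of Lemma~\ref{removal}\eqref{partb} is needed. Second, your uniqueness argument presupposes that $u$ and $v$ are known; to get uniqueness of the factorization you must also observe that the unmarked support $uD(v)$ determines $v$ (for instance, the left action preserves row indices, so the row counts of the unmarked cells recover the code of $v$), after which $u = wv^{-1}$ and your formulas recover $T_u$ and $T_v$. The paper instead establishes uniqueness by showing the removal procedure is independent of the choices made, via commutation of non-adjacent transpositions.
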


\begin{proof} The condition $j' \le T(i,j) \le i$ is automatic since
  we assumed that $T_u$ contains only marked letters and $T_v$
  contains only unmarked letters. Similarly, the respective
  column-strict and row-strict conditions are automatic. So it is
  enough to check that $T$ is balanced. 

  By Lemma~\ref{removal}, we can factor $v = s_{i_1} s_{i_2} \cdots
  s_{i_{\ell(v)}}$ into simple transpositions such that if we write
  $v_j = s_{i_1} \cdots s_{i_{j-1}} s_{i_j}$, then $T_{v_{\ell(v)}} =
  T_v$, and for $j<\ell(v)$, $T_{v_j}$ is the result of removing a
  border cell with the largest label $L_j$ from $T_{v_{j+1}}$ and
  hence is a balanced labeling. In particular, $L_1 \le L_2 \le
  \cdots \le L_{\ell(v)}$. Set $T_0 = T_u$ and $T_j = T_uv_j +
  uT_{v_j}$ for $1 \le j \le \ell(v)$. Then for $1 \le j \le \ell(v)$,
  $T_j$ is the result of switching rows $i_j$ and $i_{j+1}$ in
  $T_{j-1}$ and replacing the newly made 0 with $L_j$. Since all
  letters in $T_0$ are marked, and $L_1 \le L_2 \le \cdots \le
  L_{\ell(v)}$, we conclude from Lemma~\ref{removal}\eqref{parta} that
  each $T_j$ is balanced, and hence $T = T_{\ell(v)}$ is balanced.

  The last statement also follows from Lemma~\ref{removal}: given a
  BSL of $D(w)$, we can successively remove border cells containing
  the largest labels (which are unmarked), and the result will be a
  BSL of a diagram $D(u)$ for some permutation $u$ which contains only
  unmarked letters. The removals give the desired permutation $v =
  u^{-1}w$.

  For uniqueness, note that if at any point we have two choices of
  border cells to remove in rows $i$ and $j$, then $|i-j| >
  1$. Otherwise, if $j = i+1$, for example, then by the balanced
  condition at the hook of box $(i,w(i+2))$, $T(i,w(i+1)) =
  T(i,w(i+2)) = T(i+1,w(i+2))$, which contradicts our strictness
  conditions. Since $s_i$ and $s_j$ commute for $|i-j|>1$, it does not
  matter which one we do first.
\end{proof}

Given a BSL $T$ of $D(w)$, let $f_T(i)$, respectively $f_T(i')$, be
the number of occurrences of $i$, respectively $i'$. Define a monomial
\begin{align}
  m(T) = x_1^{f_T(1)} \cdots x_{n-1}^{f_T(n-1)} (-y_1)^{f_T(1')}
  \cdots (-y_{n-1})^{f_T((n-1)')}.
\end{align}

One more bit of notation: given a labeling $T$ of $D(w)$, let $T^*$
denote the labeling of $D(w^{-1})$ obtained by transposing $T$ and
performing the swap $i \leftrightarrow i'$.

\begin{theorem} For every permutation $w$,
  \begin{align*}
    \ss_w(x,y) = \sum_T m(T),
  \end{align*}
  where the sum is over all BSL $T$ of $D(w)$.
\end{theorem}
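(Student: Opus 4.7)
The plan is to split the sum over BSLs of $D(w)$ according to the factorization of Lemma \ref{factorlemma} so that the two resulting factors match the two factors in identity \eqref{induction}. Concretely, I would first apply Lemma \ref{factorlemma} to rewrite
\begin{align*}
\sum_T m(T) = \sum_{\substack{uv = w \\ \ell(u)+\ell(v) = \ell(w)}} \Bigl(\sum_{T_u} m(T_u)\Bigr)\Bigl(\sum_{T_v} m(T_v)\Bigr),
\end{align*}
where $T_u$ runs over BSLs of $D(u)$ using only marked letters and $T_v$ over BSLs of $D(v)$ using only unmarked letters. The multiplicativity $m(T) = m(T_u)\,m(T_v)$ is immediate from the definition of $m$: the operations $T_u \mapsto T_u v$ and $T_v \mapsto u T_v$ merely rearrange cells without changing the multisets of labels, and since $T_u$ contributes only marked entries and $T_v$ only unmarked ones, the corresponding factors in $m$ are disjoint.

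Next I would identify each inner sum with a single Schubert polynomial. For the unmarked factor, balancedness, column-strictness, and the flag condition $T_v(i,j) \le i$ are exactly the hypotheses of the Fomin--Greene--Reiner--Shimozono generating-function theorem from \cite{fomin}, giving $\sum_{T_v} m(T_v) = \ss_v(x)$. For the marked factor I would use the involution $T_u \mapsto T_u^*$ that transposes the diagram and swaps $i \leftrightarrow i'$: transposition takes $D(u)$ to $D(u^{-1})$ and swaps rows with columns (sending row-strictness to column-strictness), while the swap $i \leftrightarrow i'$ is order-reversing, so the flag condition $j' \le T_u(i,j)$ becomes $T_u^*(j,i) \le j$. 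Assuming that the balanced condition survives this transformation, $T_u^*$ is an unmarked BSL of $D(u^{-1})$ with $m(T_u) = \prod_i (-y_i)^{f_{T_u^*}(i)}$, and another appeal to \cite{fomin} yields $\sum_{T_u} m(T_u) = \ss_{u^{-1}}(-y)$.

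Finally I would reindex by $v$: concatenating reduced decompositions shows that ``$w=uv$ with $\ell(u)+\ell(v)=\ell(w)$'' is equivalent to $v \le_W w$ together with $u = wv^{-1}$, so $u^{-1} = vw^{-1}$ and
\begin{align*}
\sum_T m(T) = \sum_{v \le_W w} \ss_v(x)\,\ss_{vw^{-1}}(-y),
\end{align*}
which matches the right side of \eqref{induction} and proves the theorem. The main obstacle will be justifying rigorously that $T_u \mapsto T_u^*$ preserves balancedness. A hook in $D(u)$ consists of a row-segment to the right of a corner and a column-segment below it; transposition swaps these two roles, and the order-reversing swap $i \leftrightarrow i'$ reverses the direction in which entries are weakly increased during the balanced rearrangement. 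One must check that these two reversals cancel so that the rearrangement and the stability of the corner label are preserved; once this is verified, the remaining identifications are routine.
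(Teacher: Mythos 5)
Your proposal is correct and follows essentially the same route as the paper: unique factorization of each BSL via Lemma~\ref{factorlemma}, conversion of the marked factor to an unmarked BSL of the inverse permutation via $T \mapsto T^*$, the Fomin--Greene--Reiner--Shimozono generating function for single Schubert polynomials, and identity \eqref{induction}. The one point you flag as needing verification (that $*$ preserves balancedness) is treated as immediate in the paper as well, and your sketch of why the two reversals cancel is the right justification.
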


\begin{proof} Suppose we are given a BSL $T$ of $D(w)$. By
  Lemma~\ref{factorlemma}, there exists a unique pair of permutations
  $v^{-1}$ and $u$ such that $v^{-1}u = w$, $\ell(w) = \ell(v^{-1}) +
  \ell(u)$, a BSL $T_{v^{-1}}$ of $D(v^{-1})$ which only uses marked
  letters, and a BSL $T_u$ of $D(u)$ which only uses unmarked letters,
  such that $T = T_uv^{-1} + uT_{v^{-1}}$. The labeling $T_v =
  T^*_{v^{-1}}$ gives a BSL of $D(v)$ which only uses unmarked
  letters.

  Finally, using \eqref{induction} coupled with the fact that
  $\ss_u(x) = \sum_T m(T)$, where the sum is over all BSL of $D(u)$
  using only unmarked letters \cite[Theorem 6.2]{fomin}, we get the
  desired result.
\end{proof}

\begin{remark} The operation $T \mapsto T^*$ gives a concrete
  realization of the symmetry $\ss_w(-y,-x) = \ss_{w^{-1}}(x,y)$
  \cite[Corollary 2.4.2]{manivel}.
\end{remark}

\section{Double Schubert functors.} \label{linearalgebrasection}

\subsection{Super linear algebra preliminaries.}

Let $V = V_0 \oplus V_1$ be a free $\Z/2$-graded module over a
(commutative) ring $R$ with $V_0 = \langle e_1, \dots, e_n \rangle$
and $V_1 = \langle e'_1, \dots, e'_m \rangle$, and let $\gl(m|n) =
\gl(V)$ be the Lie superalgebra of endomorphisms of $V$. Let $\b(m|n)
\subset \gl(m|n)$ be the standard Borel subalgebra of upper triangular
matrices with respect to the ordered basis $\langle e'_m, \dots, e'_1,
e_1, \dots, e_n \rangle$. We will mainly deal with the case $m=n$, in
which case we write $\b(n) = \b(n|n)$, and if it is clear from
context, we will drop the $n$ and simply write $\b$. Also, let
$\b(n)_0 = \gl(V)_0 \cap \b(n)$ be the even degree elements in
$\b(n)$, and again, we will usually denote this by $\b_0$. We also
write $\h(n) \subset \b(n)$ for the Cartan subalgebra of diagonal
matrices (this is a Lie algebra concentrated in degree 0). Let
$\eps'_n, \dots, \eps'_1, \eps_1, \dots, \eps_n$ be the dual basis
vectors to the standard basis of $\h(n)$. For notation, write $(a_n,
\dots, a_1 | b_1, \dots, b_n)$ for $\sum_{i=1}^n (a_i\eps'_i +
b_i\eps_i)$. The even and odd roots of $\b(n)$ are $\Phi_0 = \{\eps'_j
- \eps'_i,\ \eps_i - \eps_j \mid 1 \le i < j \le n\}$ and $\Phi_1 =
\{\eps'_i - \eps_j \mid 1 \le i,j \le n \}$, respectively. The even
and odd simple roots are $\Delta_0 = \{\eps'_{i+1} - \eps'_i,\ \eps_i
- \eps_{i+1} \mid i = 1,\dots,n-1 \}$ and $\Delta_1 = \{\eps'_1 -
\eps_1 \}$.

Given a highest weight representation $W$ of $\b(n)$, we have a weight
decomposition $W = \bigoplus_\lambda W_\lambda$ as a representation of
$\h(n)$. Let $\Lambda$ be the highest weight of $W$. Then every weight
$\lambda$ appearing in the weight decomposition can be written in the
form $\Lambda - \sum n_\alpha \alpha$ where $\alpha$ ranges over the
simple roots of $\b(n)$ and $n_\alpha \in \Z_{\ge 0}$. For such a
$\lambda$, set $\omega(\lambda) = (-1)^{\sum n_\alpha \deg
  \alpha}$. Then we define the {\bf character} and {\bf
  supercharacter} of $W$ as
\begin{align}
\ch W = \sum_\lambda (\dim W_\lambda) e^\lambda, \quad \sch W =
\sum_\lambda \omega(\lambda) (\dim W_\lambda) e^\lambda.
\end{align}
Here the $e^\lambda$ are formal symbols with the multiplication rule
$e^\lambda e^\mu = e^{\lambda + \mu}$.

We will need $\Z/2$-graded analogues of the divided and exterior
powers (see \cite[\S 2.4]{weyman} for the dual versions of our
definitions). Let $F = F_0 \oplus F_1$ be a free $R$-supermodule. Let
$\D$ denote the divided power functor, let $\bigwedge$ denote the
exterior power functor, and let $\Sym$ denote the symmetric power
functor. Then $\bigwedge^i F$ and $\D^i F$ are $\Z$-graded modules
with terms given by
\begin{align}
(\bigwedge^i F)_d = \bigwedge^{i-d} F_0 \otimes \Sym^d F_1, \quad
(\D^i F)_d = \D^{i-d} F_0 \otimes \bigwedge^d F_1.
\end{align}
We can define a coassociative $\Z$-graded comultiplication $\Delta
\colon \D^{i+j} F \to \D^i F \otimes \D^j F$ as follows. On
degree $d$, pick $0 \le a \le i$ and $0 \le b \le j$ such that $a+b =
d$. Then we have the composition $\Delta_{a,b}$
\begin{equation}
  \begin{split}
    (\D^{i+j} F)_d &= \D^{i+j-a-b} F_0 \otimes \bigwedge^{a+b} F_1\\
    & \xrightarrow{\Delta' \otimes \Delta'} \D^{i-a} F_0 \otimes
    \D^{j-b} F_0 \otimes \bigwedge^a F_1 \otimes \bigwedge^b F_1\\
    & \cong \D^{i-a} F_0 \otimes \bigwedge^a F_1 \otimes \D^{j-b}
    F_0 \otimes \bigwedge^b F_1 = (\D^i F)_a \otimes (\D^j F)_b,
  \end{split}
\end{equation}
where $\Delta'$ is the usual comultiplication, and we define $\Delta$
on the degree $d$ part to be $\sum_{a+b = d} \Delta_{a,b}$.

Similarly, we can define an associative $\Z$-graded multiplication $m
\colon \bigwedge^i F \otimes \bigwedge^j F \to \bigwedge^{i+j} F$ as
follows. For degrees $a$ and $b$, we have
\begin{equation}
  \begin{split}
    (\bigwedge^i F)_a \otimes (\bigwedge^j F)_b &= \bigwedge^{i-a} F_0
    \otimes \Sym^a F_1 \otimes \bigwedge^{j-b} F_0 \otimes \Sym^b F_1 \\
    &\cong \bigwedge^{i-a} F_0 \otimes \bigwedge^{j-b} F_0 \otimes
    \Sym^a F_1 \otimes \Sym^b F_1\\
    &\xrightarrow{m' \otimes m'} \bigwedge^{i+j-a-b} F_0 \otimes
    \Sym^{a+b} F_1 = (\bigwedge^{i+j} F)_{a+b},
  \end{split}
\end{equation}
where $m'$ is the usual multiplication.

\subsection{Constructions.}

Define a flag of $\Z/2$-graded submodules
\begin{align} \label{Z2flag} 
  V^\bullet : V^{-n} \subset \cdots \subset V^{-1} \subset V^1 \subset
  \cdots \subset V^n
\end{align}
such that $V^{-1}$ consists of all of the odd elements of $V^n$. We
will say that the flag is {\bf split} if each term and each quotient
is a free module. Fix a permutation $w \in \Sigma_n$. Let $r_k =
r_k(w)$, respectively $c_j = c_j(w)$, be the number of boxes in the
$k$th row, respectively $j$th column, of $D(w)$. Define $\chi_{k,j}$
to be 1 if $(k,j) \in D(w)$ and 0 otherwise. Consider the map
\begin{equation} \label{schubertimage}
  \begin{split}
    \bigotimes_{k=1}^{n-1} \D^{r_k} V^k &\xrightarrow{\otimes
      \Delta} \bigotimes_{k=1}^{n-1} \bigotimes_{j=1}^{n-1}
    \D^{\chi_{k, j}} V^k \cong \bigotimes_{j=1}^{n-1}
    \bigotimes_{k=1}^{n-1} \D^{\chi_{k, j}} V^k\\ 
    & \xrightarrow{\otimes m} \bigotimes_{j = 1}^{n-1} \bigwedge^{c_j}
    V^{w^{-1}(j)} \xrightarrow{\otimes \pi} \bigotimes_{j = 1}^{n-1}
    \bigwedge^{c_j} (V^{w^{-1}(j)} / V^{-j-1}),
  \end{split}
\end{equation}
where $\otimes \pi$ denotes the product of projection maps. Note that
$\D^1 V^k = V^k$ and $\D^0 V^k = R$, so that the multiplication above
makes sense. Then its image $\Schub_w(V^\bullet)$ is the {\bf
  $\Z/2$-graded Schubert functor}, or {\bf double Schubert
  functor}. By convention, the empty tensor product is $R$, so that if
$w$ is the identity permutation, then $\Schub_w(V^\bullet) = R$.

This definition is clearly functorial: given an even map of flags $f
\colon V^\bullet \to W^\bullet$, i.e., $f(V^k) \subset W^k$ for $-n
\le k \le n$, we have an induced map $f \colon \Schub_w(V^\bullet) \to
\Schub_w(W^\bullet)$.

We will focus on the case when $V^{-i} = \langle e'_n, e'_{n-1},
\dots, e'_i \rangle$ and $V^i = V^{-1} + \langle e_1, e_2, \dots, e_i
\rangle$, so that $\Schub_w = \Schub_w(V^\bullet)$ is a
$\b(n)$-module.

\begin{remark} One could dually define the double Schubert functor as
  the image of (dual) exterior powers mapping to symmetric powers. One
  has to be careful, because the $\Z/2$-graded version of exterior
  powers are not self-dual. For the dual of our definition, one uses
  $(\bigwedge^i F)_d = \bigwedge^{i-d} F_0 \otimes \D^d F_1$.  We have
  chosen our definitions to be consistent with
  \cite{schubertfunctor}. This will be especially convenient for
  Theorem~\ref{transitionfiltration}.
\end{remark}

\begin{remark}
  We could also define $\Schub_D(V^\bullet)$ for an arbitrary diagram
  $D$ which does not necessarily come from a permutation. This is
  relevant in \cite[\S 4]{schubertfunctor}, whose proof we use in
  Theorem~\ref{transitionfiltration}. However, since the details will
  go through without significant changes, we will have no need to
  elaborate on this point.
\end{remark}

\begin{lemma} \label{lemma:tensoridentity} Let $w \in \Sigma_n$ and $v
  \in \Sigma_m$ be two permutations. Define a new permutation $u \in
  \Sigma_{n+m}$ by $u(i) = w(i)$ for $i=1,\dots,n$ and $u(n+j) = v(j)$
  for $j=1,\dots,m$. Also, define a permutation $v' \in \Sigma_{n+m}$
  by $v'(i) = i$ for $i=1,\dots,n$ and $v'(n+j) = v(j)$ for
  $j=1,\dots,m$. Then 
  \[
  \Schub_u(V^\bullet) \cong \Schub_w(V^\bullet) \otimes
  \Schub_{v'}(V^\bullet).
  \]
\end{lemma}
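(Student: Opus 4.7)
The plan is to exploit the block-diagonal structure: since $u$ acts as $w$ on $\{1,\dots,n\}$ and as (the shift of) $v$ on $\{n+1,\dots,n+m\}$, the diagram $D(u)$ splits as a disjoint union of a ``$w$-block'' and a ``$v'$-block,'' and the construction \eqref{schubertimage} factors accordingly through this splitting. (Here and below I use the convention, implicit in the lemma, that $u(n+j) = n+v(j)$.)

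First I would verify that $D(u) = D(w) \sqcup D(v')$ as subsets of the $(n+m)\times(n+m)$ grid, with $D(w)$ lying in rows and columns $\{1,\dots,n-1\}$ and $D(v')$ in rows and columns $\{n+1,\dots,n+m-1\}$. A short case analysis of pairs $i<j$ suffices: if $j \le n$, then $u(i)=w(i)$, $u(j)=w(j)$, contributing precisely $D(w)$; if $i \le n < j$, then $u(i)\le n < u(j)$, so no box arises; if $n < i < j$, then $u(i)>u(j)$ iff $v(i-n)>v(j-n)$, and the resulting box $(i,u(j))=(i,n+v(j-n))$ is exactly the corresponding box of $D(v')$. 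Consequently $r_k(u) = r_k(w)$ for $k \le n$ and $r_k(u) = r_k(v')$ for $k>n$; similarly $c_j(u)=c_j(w)$ for $j<n$ and $c_j(u)=c_j(v')$ for $j>n$; and the middle row $k=n$ and column $j=n$ are empty. Finally, $u^{-1}(j) = w^{-1}(j)$ for $j<n$ and $u^{-1}(n+l) = (v')^{-1}(n+l) = n+v^{-1}(l)$.

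Next I would check that the map \eqref{schubertimage} for $u$ is literally the tensor product of the corresponding maps for $w$ and for $v'$. The source $\bigotimes_{k=1}^{n+m-1} \D^{r_k(u)} V^k$ splits along the row blocks (with the middle factor $\D^0 V^n = R$), matching $\bigotimes_{k=1}^{n-1} \D^{r_k(w)} V^k \otimes \bigotimes_{j=1}^{m-1} \D^{r_{n+j}(v')} V^{n+j}$. The comultiplication $\otimes \Delta$ preserves this splitting because the indicator $\chi_{k,j}$ for $D(u)$ vanishes whenever $(k,j)$ lies outside the correct block, so no cross terms $\D^{\chi_{k,j}} V^k$ appear between the two blocks. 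Under the reordering isomorphism from row-major to column-major order the two blocks of tensor factors remain separated, the multiplications $\otimes m$ act only within each block (using the identifications of $u^{-1}(j)$ above), and the projections $\otimes \pi$ land in the tensor product of the two column-indexed targets of $\Schub_w$ and $\Schub_{v'}$. Taking images yields $\Schub_u(V^\bullet) \cong \Schub_w(V^\bullet) \otimes \Schub_{v'}(V^\bullet)$, where I use that the image of a tensor product of $R$-linear maps between free modules is the tensor product of the images.

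The main obstacle is essentially bookkeeping: tracking the middle reordering isomorphism in \eqref{schubertimage} and verifying that after reordering the row-major tensor product into column-major order, the two row-blocks really do land in disjoint column-blocks. This is guaranteed by the disjointness of $D(w)$ and $D(v')$ in both row and column indices, which prevents any $\D^{\chi_{k,j}} V^k$ factor coming from one block from migrating into a column belonging to the other block, so the verification reduces to routine naturality of the comultiplication and multiplication across a tensor product of blocks.
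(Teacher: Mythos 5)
Your proposal is correct and is essentially the paper's own argument: the paper's proof consists of exactly the three facts you verify, namely the definition of the double Schubert functor, the decomposition $D(u) = D(w) \cup D(v')$, and the disjointness of the rows and columns of the two pieces, which forces the map \eqref{schubertimage} to factor as a tensor product. You have simply written out the case analysis and bookkeeping that the paper leaves implicit (including correctly reading $u(n+j) = n + v(j)$, a typo in the statement).
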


\begin{proof} This follows from the definition of double Schubert
  functors, the fact that $D(u) = D(w) \cup D(v')$, and the fact that
  no two cells of $D(w)$ and $D(v')$ lie in the same row or column. 
\end{proof}

\begin{example} Consider $n=3$ and $w = 321$. Then $r_1 = 2$, $r_2 = 1$,
  $c_1 = 2$, and $c_2 = 1$. We need to calculate the image of the map
  \[
  \D^2 V^1 \otimes V^2 \xrightarrow{\Delta \otimes 1} (V^1 \otimes
  V^1) \otimes V^2 \xrightarrow{t_{2,3}} (V^1 \otimes V^2) \otimes V^1
  \xrightarrow{m \otimes 1} \bigwedge^2(V^3 / V^{-2}) \otimes
  V^2/V^{-3},
  \]
  where $t_{2,3}$ is the map that switches the second and third parts
  of the tensor product. Write $x$, $y$ for $e_1$, $e_2$, and $x'$,
  $y'$ for $e'_1$, $e'_2$. We can ignore $e_3$ and $e'_3$ since they
  will not appear in the image. We can write $\D^2 V^1 = \langle x^2,
  x \otimes x', x \otimes y', x' \wedge y' \rangle$ and $V^2 = \langle
  x, y, x', y' \rangle$. Then we have
  \begin{align*}
    m(t_{2,3}(\Delta(x^2 \otimes x))) &= m(t_{2,3}(x \otimes x
    \otimes x)) = 0\\
    m(t_{2,3}(\Delta(x^2 \otimes y))) &= m(t_{2,3}(x \otimes x
    \otimes y)) = (x \wedge y) \otimes x \\
    m(t_{2,3}(\Delta(x^2 \otimes x'))) &= m(t_{2,3}(x \otimes x
    \otimes x')) = (x \otimes x') \otimes x\\
    m(t_{2,3}(\Delta(x^2 \otimes y'))) &= m(t_{2,3}( x \otimes x
    \otimes y')) = 0
  \end{align*}
  \begin{align*}
    m(t_{2,3}(\Delta(x \otimes x' \otimes x))) &= m(t_{2,3}( (x
    \otimes x' + x' \otimes x) \otimes x)) = (x \otimes x')
    \otimes x\\
    m(t_{2,3}(\Delta(x \otimes x' \otimes y))) &= m(t_{2,3}( (x
    \otimes x' + x' \otimes x) \otimes y)) = (x \wedge y) \otimes x' +
    (y \otimes x') \otimes x    \\
    m(t_{2,3}(\Delta(x \otimes x' \otimes x'))) &= m(t_{2,3}( (x
    \otimes x' + x' \otimes x) \otimes x')) = (x \otimes x')
    \otimes x' + x'^2 \otimes x \\
    m(t_{2,3}(\Delta(x \otimes x' \otimes y'))) &= m(t_{2,3}( (x
    \otimes x' + x'    \otimes x) \otimes y')) = 0 
  \end{align*}
  \begin{align*}
    m(t_{2,3}(\Delta(x \otimes y' \otimes x))) &= m(t_{2,3}( (x
    \otimes    y' + y'    \otimes x) \otimes x)) = 0 \\
    m(t_{2,3}(\Delta(x \otimes y' \otimes y))) &= m(t_{2,3}( (x
    \otimes    y' + y'    \otimes x) \otimes  y)) = (x \wedge y) \otimes y'\\
    m(t_{2,3}(\Delta(x \otimes y' \otimes x'))) &= m(t_{2,3}( (x
    \otimes y' + y' \otimes x) \otimes x')) = (x \otimes x')
    \otimes y' \\
    m(t_{2,3}(\Delta(x \otimes y' \otimes y'))) &= m(t_{2,3}( (x
    \otimes y' + y'    \otimes x) \otimes y')) = 0 
  \end{align*}
  \begin{align*}
    m(t_{2,3}(\Delta(x' \wedge y' \otimes x))) &= m(t_{2,3}( (x'
    \otimes y' - y' \otimes x') \otimes x)) = (x \otimes x')
    \otimes y' \\
    m(t_{2,3}(\Delta(x' \wedge y' \otimes y))) &= m(t_{2,3}( (x'
    \otimes y' - y' \otimes x') \otimes y)) = (y \otimes x')
    \otimes y' \\
    m(t_{2,3}(\Delta(x' \wedge y' \otimes x'))) &= m(t_{2,3}( (x'
    \otimes y' - y'    \otimes x') \otimes  x')) = x'^2 \otimes y' \\
    m(t_{2,3}(\Delta(x' \wedge y' \otimes y'))) &= m(t_{2,3}( (x'
    \otimes y' - y' \otimes x') \otimes y')) = 0 \qedhere
  \end{align*}
\end{example}

Here is a combinatorial description of the map
\eqref{schubertimage}. The elements of $\bigotimes_{k=1}^{n-1}
\D^{r_k} V^k$ can be thought of as labelings of $D = D(w)$ such that
in row $k$, only the labels $n', (n-1)', \dots, 1', 1, \dots, k$ are
used, such that there is at most one use of $i'$ in a given row, and
such that the entries in each row are ordered in the usual way (i.e.,
$n' < (n-1)' < \cdots < 1' < 1 < \cdots < k$). Let $\Sigma_D$ be the
permutation group of $D$. We say that $\sigma \in \Sigma_D$ is {\bf
  row-preserving} if each box and its image under $\sigma$ are in the
same row. Denote the set of row-preserving permutations as
$\row(D)$. Let $T$ be a labeling of $D$ that is row-strict with
respect to the marked letters. Let $\row(D)_T$ be the subgroup of
$\row(D)$ that leaves $T$ fixed, and let $\row(D)^T$ be the set of
cosets $\row(D) / \row(D)_T$. Given $\sigma \in \row(D)^T$, and
considering the boxes as ordered from left to right, let $\alpha(T,
\sigma)_k$ be the number of inversions of $\sigma$ among the {\it
  marked letters} in the $k$th row, and define $\alpha(T, \sigma) =
\sum_{k=1}^{n-1} \alpha(T, \sigma)_k$. Note that this number is
independent of the representative chosen since $T$ is row strict with
respect to the marked letters.  Then the comultiplication sends $T$ to
$\sum_{\sigma \in \row(D)^T} (-1)^{\alpha(T, \sigma)} \sigma T$ where
$\sigma T$ is the result of permuting the labels of $T$ according to
$\sigma$.

For the multiplication map, we can interpret the columns as being
alternating in the {\it unmarked letters} and symmetric in the {\it
  marked letters}. We write $m(T)$ for the image of $T$ under this
equivalence relation. Therefore, the map \eqref{schubertimage} can be
defined as
\begin{align} \label{labelingmap}
  T \mapsto \sum_{\sigma \in \row(D)^T} (-1)^{\alpha(T, \alpha)}
  m(\sigma T).
\end{align}

\subsection{A basis and a filtration.} \label{section:filtration}

In order to prove properties of $\Schub_w$, we will construct a
filtration by submodules, which is based on the filtration of the
single Schubert functors introduced by Kra\'skiewicz and Pragacz
\cite{schubertfunctor}.

Let $w \in \Sigma_n$ be a nonidentity permutation. Consider the set of
pairs $(\alpha,\beta)$ such that $\alpha < \beta$ and $w(\alpha) >
w(\beta)$. Choose $(\alpha,\beta)$ to be maximal with respect to the
lexicographic ordering. Let $\gamma_1 < \cdots < \gamma_k$ be the
numbers such that $\gamma_t < \alpha$ and $w(\gamma_t) < w(\beta)$,
and such that $\gamma_t < i < \alpha$ implies that $w(i) \notin
\{w(\gamma_t), w(\gamma_t) + 1, \dots, w(\beta)\}$. Then we have the
following identity of double Schubert polynomials
\begin{align} \label{maxtransition} \ss_w = \ss_v \cdot (x_\alpha -
  y_{w(\beta)}) + \sum_{t=1}^k \ss_{\psi_t},
\end{align}
where $v = wt_{\alpha,\beta}$ and $\psi_t = w t_{\alpha,\beta}
t_{\gamma_t, \alpha}$. Here $t_{i,j}$ denotes the transposition which
switches $i$ and $j$. See, for example, \cite[Exercise
2.7.3]{manivel}. The formula in \eqref{maxtransition} will be called a
{\bf maximal transition} for $w$. Define the {\bf index} of a
permutation $u$ to be the number $\sum_k (k-1) \#\{j > k \mid u(k) >
u(j) \}$. Note that the index of $\psi_t$ is smaller than the index of
$w$.

When $w = s_i$ is a simple transposition, $v = 1$ is the identity, $k
= 1$, and $\psi_1 = s_{i-1}$. See
Example~\ref{eg:simpletranspositions} for more details regarding the
filtration in this case.

\begin{theorem} \label{transitionfiltration} Let $V^\bullet$ be a
  split flag as in \eqref{Z2flag}. Given a nonidentity permutation $w
  \in \Sigma_n$, let \eqref{maxtransition} be the maximal transition
  for $w$. Then there exists a functorial $\b$-equivariant filtration
  \[
  0 = F_0 \subset F_1 \subset \cdots \subset F_k \subset F' \subset F
  = \Schub_w(V^\bullet)
  \]
  such that $F / F' \cong \Schub_v(V^\bullet) \otimes V^\alpha / V^{\alpha-1}$,
  $F' / F_k \cong \Schub_v(V^\bullet) \otimes V^{-w(\beta)} /
  V^{-w(\beta)-1}$, and $F_t / F_{t-1} \cong \Schub_{\psi_t}(V^\bullet)$
  for $t=1,\dots,k$.
\end{theorem}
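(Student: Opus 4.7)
The plan is to adapt the filtration argument of Kra\'skiewicz--Pragacz \cite[\S 4]{schubertfunctor} from single Schubert functors to the $\Z/2$-graded (double) setting. By the lex-maximality of $(\alpha,\beta)$, the cell $(\alpha, w(\beta))$ is the rightmost cell of the bottom-most nonempty row of $D(w)$, and deleting it together with applying $t_{\alpha,\beta}$ on columns yields $D(v)$. The filtration will record which level of the ambient flag supplies the entry at this distinguished cell.

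First I would isolate the entry at the distinguished cell: using the comultiplication, the row-$\alpha$ factor $\D^{r_\alpha} V^\alpha$ of the source of \eqref{schubertimage} splits off a single $V^\alpha$-factor corresponding to the entry at $(\alpha, w(\beta))$. The subflag
\[
V^{-w(\beta)-1} \subset V^{-w(\beta)} \subset \cdots \subset V^{-1} \subset V^1 \subset \cdots \subset V^{\alpha-1} \subset V^\alpha
\]
induces a filtration of this isolated factor; pushing it forward through \eqref{schubertimage} yields a filtration $F_\bullet \subset F' \subset F = \Schub_w(V^\bullet)$ indexed by which level $V^\gamma$ supplies the isolated entry. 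Since every piece of the flag is $\b$-stable, the resulting filtration is $\b$-equivariant.

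For the top two associated graded pieces: when the isolated entry is forced to lie in $V^\alpha/V^{\alpha-1}$, the remaining data on $D(w)\setminus\{(\alpha,w(\beta))\}$ assembles via the identification $D(w)\setminus\{(\alpha, w(\beta))\} = D(v)\cdot t_{\alpha,\beta}$ into $\Schub_v(V^\bullet)$, yielding $F/F'\cong \Schub_v(V^\bullet)\otimes V^\alpha/V^{\alpha-1}$. Exactly the same argument, applied to the quotient $V^{-w(\beta)}/V^{-w(\beta)-1}$, produces $F'/F_k\cong \Schub_v(V^\bullet)\otimes V^{-w(\beta)}/V^{-w(\beta)-1}$. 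For the intermediate subquotients, the entry is some $e_\gamma$ with $1\le\gamma<\alpha$ or some $e'_j$ with $j<w(\beta)$; the Kra\'skiewicz--Pragacz bumping procedure transforms the diagram $D(w)$ into a new diagram (the entry is ``pushed'' from row $\alpha$ to row $\gamma$), and this surgery yields $D(\psi_t)$ precisely when $\gamma = \gamma_t$, with the visibility conditions in the definition of $\gamma_t$ ensuring well-definedness. The marked intermediate values $(w(\beta)-1)',\dots,1'$ give trivial subquotients: once the distinguished box is filled by such a label, the row-strict condition on the marked alphabet combined with the column structure forces the image in $\bigwedge^{c_{w(\beta)}}(V^\beta/V^{-w(\beta)-1})$ to vanish modulo $F_t$.

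The main obstacle is establishing the isomorphism $F_t/F_{t-1}\cong \Schub_{\psi_t}(V^\bullet)$: one has to carry out the bumping at the level of the comultiplication--multiplication map \eqref{labelingmap} and verify it induces an isomorphism onto the image defining $\Schub_{\psi_t}$. This is the content of the original KP argument; the super structure poses no new difficulty here because the marked alphabet is inert for these intermediate pieces (only unmarked generators $e_{\gamma_t}$ appear in the distinguished cell). The theorem then follows by induction on the index of $w$, since $v$ and every $\psi_t$ have strictly smaller index than $w$, with the base case covered by simple transpositions (see Example~\ref{eg:simpletranspositions}).
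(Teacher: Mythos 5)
Your overall strategy --- adapt the Kra\'skiewicz--Pragacz filtration, isolate the distinguished southeast corner $(\alpha,w(\beta))$, and identify the subquotients by which level of the flag occupies that cell --- is the same as the paper's, and your identification of the top two graded pieces matches. But there is a genuine gap at exactly the step you flag as ``the main obstacle,'' and the way you propose to close it is not how the argument actually works. The bumping construction does \emph{not} directly yield isomorphisms $F_t/F_{t-1}\cong\Schub_{\psi_t}(V^\bullet)$; it only yields surjections $F_t/F_{t-1}\surj\Schub_{\psi_t}(V^\bullet)$. Upgrading these surjections to isomorphisms (and simultaneously showing that $F'$ coincides with the $\b$-submodule generated by $F_k$ and the image of the cyclic generator under the root vector $e_\alpha\mapsto e'_{w(\beta)}$ --- equivalently, that none of the other flag levels contribute) is done by a counting argument: reduce to a field by base change from $\Z$, and compare $d_w=\dim_R\Schub_w(V^\bullet)$ with $z_w=\ss_w(1,-1)$, using the specialization of the maximal transition identity \eqref{maxtransition}, the tensor product identity of Lemma~\ref{lemma:tensoridentity}, and the simple-transposition base case. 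Your proposal omits this count entirely, and your substitute arguments are incomplete: you sketch a vanishing argument only for the marked labels $j'$ with $j<w(\beta)$, and say nothing about why unmarked labels $\gamma<\alpha$ with $\gamma\notin\{\gamma_1,\dots,\gamma_k\}$ contribute nothing. The claim that ``the super structure poses no new difficulty'' also undersells the point that the numerical identity needed is now for \emph{double} Schubert polynomials at $(x,y)=(1,-1)$, which is genuinely new relative to KP and rests on \eqref{induction}.

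A secondary issue: you describe the filtration as the pushforward of the flag filtration on the isolated factor, but the order of the subquotients in the theorem does not follow the flag order (the piece $V^{-w(\beta)}/V^{-w(\beta)-1}$, at the bottom of the subflag, sits second from the top of the filtration, above all the $F_t$). Since $\b$ sends $e_{\gamma_t}$ into $V^{\gamma_t}$, which contains every $e'_j$, a filtration literally ordered by flag level would not be $\b$-stable with the stated subquotients. The paper avoids this by defining $F'$ as the kernel of an explicit map $\phi$ built from the comultiplication on the column factor $\bigwedge^{c_{w(\beta)}}W^\beta$ followed by the projection $W^\beta\to V^\beta/V^{\alpha-1}$ (so that $F/F'$ is the image of a $\b$-map, hence exactly $\Schub_v(V^\bullet)\otimes V^\alpha/V^{\alpha-1}$), and by defining the $F_t$ as explicit $\b$-submodules following KP rather than as flag strata.
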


\begin{proof} For notation, write $W^i = V^i / V^{-w(i) - 1}$, and let
  $p \colon W^\beta \to V^\beta / V^{\alpha-1}$ be the projection
  map. Define the $\b$-equivariant morphism $\phi'$ by the composition
  \begin{align*}
    &\bigwedge^{c_{w(1)}} W^1 \otimes \cdots \otimes
    \bigwedge^{c_{w(\alpha)}} W^\alpha \otimes \cdots \otimes
    \bigwedge^{c_{w(\beta)}} W^\beta \otimes \cdots \xrightarrow{1
      \otimes \cdots \otimes 1 \otimes \cdots \otimes \Delta \otimes
      \cdots}\\
    &\bigwedge^{c_{w(1)}} W^1 \otimes \cdots \otimes 
    \bigwedge^{c_{w(\alpha)}} W^\alpha \otimes \cdots \otimes
    \bigwedge^{c_{w(\beta)}-1} W^\beta \otimes W^\beta \otimes \cdots
    \xrightarrow{T} \\
    &\left( \bigwedge^{c_{w(1)}} W^1 \otimes \cdots \otimes
      \bigwedge^{c_{w(\beta)}-1} W^\beta \otimes \cdots \otimes
      \bigwedge^{c_{w(\alpha)}} W^\alpha \otimes \cdots \right)
    \otimes W^\beta
  \end{align*}
  where $T$ is the map which switches the order of the tensor product
  in the way prescribed. Let $\phi = (1 \otimes p) \circ \phi'$. We
  set $F' = \ker \phi$. Let $C_w$ and $C_v$ be the $\b$-cyclic
  generators of $\Schub_w(V^\bullet)$ and $\Schub_v(V^\bullet)$,
  respectively. These are given by BSLs where the $i$th row only has
  the label $i$. By maximality of the pair $(\alpha, \beta)$ (with
  respect to the property $\alpha < \beta$ and $w(\alpha) >
  w(\beta)$), the lowest box in column $w(\beta)$ is in row
  $\alpha$. Hence, restricting $\phi$ to $F = \Schub_w(V^\bullet)$, we
  get $\phi(C_w) = C_v \otimes e_\alpha$. This gives an isomorphism
  $F/F' \to \Schub_v(V^\bullet) \otimes V^\alpha / V^{\alpha-1}$.

  Let $X \in \b$ be the matrix defined by $X(e_\alpha) =
  e'_{w(\beta)}$ and $X(e_i) = 0$ for $i \ne \alpha$ and $X(e'_j) = 0$
  for all $j$. We claim that the rightmost box in row $\alpha$ has
  column index $w(\beta)$. If not, then there is a box $(\alpha,
  w(\beta')) \in D(w)$ with $w(\beta') > w(\beta)$ and $\beta' >
  \alpha$. If $\beta' < \beta$, then $(\beta', \beta) > (\alpha,
  \beta)$ which contradicts the maximality of $(\alpha,
  \beta)$. Otherwise, if $\beta' > \beta$, we have $(\alpha, \beta') >
  (\alpha, \beta)$ which also contradicts maximality. This
  contradiction proves the claim. The claim implies that
  $\phi'(X(C_w)) = C_v \otimes e'_{w(\beta)}$, and hence $X(C_w) \in
  \ker \phi$. Letting $F''$ be the $\b$-submodule generated by
  $X(C_w)$, we get an isomorphism $F'' / \ker \phi' \cong
  \Schub_v(V^\bullet) \otimes V^{-w(\beta)} / V^{-w(\beta)-1}$.

  Using the notation of \cite[\S 4]{schubertfunctor} with the obvious
  changes (see also \cite[Remark 5.3]{schubertfunctor}), let $F_t =
  \sum_{r \le t} S_{\mathcal{I}_r}(V^\bullet)$.\footnote{There is a
    typo in the definition of $\mathcal{F}_t$ in
    \cite{schubertfunctor} regarding $\le$ versus $<$.} The proofs
  from \cite[\S 4]{schubertfunctor} of the fact that there is a
  surjection $F_t / F_{t-1} \to \Schub'_{\psi_t}(V^\bullet)$ in the
  ungraded case extend to the $\Z/2$-graded case. We just need to show
  that these surjections are actually isomorphisms and that $F' =
  F''$. Since this is all defined over $\Z$ and obtained for arbitrary
  $R$ via extension of scalars, it is enough to prove the
  corresponding statements when $R$ is a field of arbitrary
  characteristic.

  We will use the proof of Step 3 in \cite[\S 4]{schubertfunctor}. The
  key steps there involve a tensor product identity \cite[Lemma
  1.8]{schubertfunctor}, using the maximal transitions, and verifying
  the theorem for the simple transpositions. The tensor product
  identity in our case is Lemma~\ref{lemma:tensoridentity}, and the
  maximal transitions still exist. Also, the fact that the statement
  is valid for simple transpositions can be seen directly, or see
  Example~\ref{eg:simpletranspositions}. The rest of the proof goes
  through using the definitions $d_w = \dim_R \Schub_w(V^\bullet)$ and
  $z_w = \ss_w(1, -1)$. So we have defined the desired filtration, and
  the functoriality is evident from the constructions.
\end{proof}

Given any labeling $T$, denote its weight by $w(T) = (a_{-n}, \dots,
a_{-1} | a_1, \dots, a_n)$, where $a_i$ is the number of times that
the label $i$ is used, and $a_{-i}$ is the number of times that the
label $i'$ is used. We define a {\bf dominance order} $\ge$ by
$(a_{-n}, \dots, a_{-1} | a_1, \dots, a_n) \ge (a'_{-n}, \dots,
a'_{-1} | a'_1, \dots, a'_n)$ if $\sum_{i=-n}^k a_i \ge \sum_{i=-n}^k
a'_i$ for all $-n \le k \le n$.

\begin{theorem} \label{basis} Assume that the flag $V^\bullet$ is
  split. The images of the BSLs under \eqref{schubertimage} form a
  basis over $R$ for $\Schub_w$.
\end{theorem}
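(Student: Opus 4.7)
The plan is to combine a supercharacter count (via Theorem~\ref{transitionfiltration}) with an inductive spanning argument (via Lemma~\ref{factorlemma}).

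First I would compute $\sch \Schub_w$ by induction on the index of $w$. The base case is the identity permutation, where $\Schub_w = R$ and $\sch \Schub_w = 1$. For the inductive step, Theorem~\ref{transitionfiltration} breaks $\Schub_w$ into subquotients $\Schub_v(V^\bullet) \otimes V^\alpha/V^{\alpha-1}$, $\Schub_v(V^\bullet) \otimes V^{-w(\beta)}/V^{-w(\beta)-1}$, and $\Schub_{\psi_t}(V^\bullet)$ for $t = 1,\dots,k$. Identifying the character variables via $e^{\eps_i} \leftrightarrow x_i$ and $e^{\eps'_j} \leftrightarrow -y_j$ (the sign being forced in the supercharacter by the odd simple root $\eps'_1 - \eps_1$ one must traverse to reach the odd sector from the highest weight), these subquotients contribute $\ss_v(x,y)\,x_\alpha$, $\ss_v(x,y)(-y_{w(\beta)})$, and $\ss_{\psi_t}(x,y)$. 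Summing via the maximal transition identity \eqref{maxtransition} gives $\sch \Schub_w = \ss_w(x,y)$, since $v$ and every $\psi_t$ have smaller index than $w$.

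Combined with the earlier theorem $\ss_w(x,y) = \sum_T m(T)$ and the observation (visible from \eqref{labelingmap}) that the image of a BSL $T$ lies in the weight space of weight $w(T)$, this yields $\dim_R(\Schub_w)_\lambda = \#\{\text{BSLs of weight }\lambda\}$ for every weight $\lambda$. Hence it suffices to show that the images of the BSLs span $\Schub_w$, or equivalently (by the weight-by-weight dimension count) that they are linearly independent.

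I would prove spanning by induction on the index. Lemma~\ref{factorlemma} gives a bijection between BSLs of $D(w)$ and pairs $(T_u, T_{v^{-1}})$ with $T_u$ an unmarked BSL of $D(u)$, $T_{v^{-1}}$ a marked BSL of $D(v^{-1})$, and $v^{-1}u = w$ with additive lengths. The aim is to align this bijection with the graded pieces of the filtration: BSLs whose maximum label is unmarked and fills the border cell $(\alpha, w(\beta))$ should span $F/F'$, those with a marked maximum label there should span $F'/F_k$, and the remaining BSLs should match the subquotients $F_t/F_{t-1} \cong \Schub_{\psi_t}$ where induction applies. The pure-unmarked spanning statement for single Schubert functors is already established in~\cite{schubertfunctor}, and the marked extension is parallel by the symmetry $T \mapsto T^*$.

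The main obstacle will be this bookkeeping step: precisely matching Lemma~\ref{factorlemma} to the filtration of Theorem~\ref{transitionfiltration}, and verifying that each class of BSLs surjects onto its intended graded piece modulo lower terms without double-counting. Once spanning is confirmed, the supercharacter identity immediately promotes the spanning set to a basis.
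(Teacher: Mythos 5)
Your first half---using the filtration of Theorem~\ref{transitionfiltration} together with the maximal transition \eqref{maxtransition} to get $\sch\Schub_w=\ss_w(x,y)$, hence a weight-by-weight count $\dim_R(\Schub_w)_\lambda=\#\{\text{BSLs of weight }\lambda\}$, so that spanning, linear independence, and being a basis all become equivalent---is sound and is exactly how the paper uses Theorem~\ref{transitionfiltration}. (Two small points: the induction is a double induction, first on $\ell$ and then on index, since $v=wt_{\alpha,\beta}$ has smaller \emph{length}, not necessarily smaller index, while the $\psi_t$ have the same length but smaller index; and one should reduce to a field first, as everything is defined over $\Z$ and compatible with base change.)

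The second half has a genuine gap. Your plan is to prove \emph{spanning} by aligning the factorization of Lemma~\ref{factorlemma} with the graded pieces of the filtration of Theorem~\ref{transitionfiltration}, but these two structures track two \emph{different} decompositions of $\ss_w$: Lemma~\ref{factorlemma} is the combinatorial shadow of the weak-order expansion \eqref{induction}, $\ss_w(x,y)=\sum_{u\le_W w}\ss_u(x)\ss_{uw^{-1}}(-y)$, whereas the filtration is indexed by the maximal transition \eqref{maxtransition}, $\ss_w=\ss_v\cdot(x_\alpha-y_{w(\beta)})+\sum_t\ss_{\psi_t}$. There is no given partition of the BSLs of $D(w)$ into classes matching $F/F'$, $F'/F_k$, and the $F_t/F_{t-1}$ (note also that $(\alpha,w(\beta))$ need not be a border cell, e.g.\ for $w=312$ one has $(\alpha,\beta)=(1,3)$ and $(1,2)$ is not a border cell), and even given such a partition with the correct cardinalities one would still have to show that each class generates its intended subquotient modulo lower terms. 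That is precisely a straightening problem, and the paper explicitly states after this theorem that no straightening algorithm is known and leaves it open. The paper avoids spanning altogether: it proves \emph{linear independence} directly, by assigning to each BSL a reduced decomposition $s^*(T)s(T)$ via border-cell removal, totally ordering these, and showing that the map \eqref{labelingmap} is triangular with respect to this order (if $m(T')=\pm m(\sigma T)$ for a row-preserving $\sigma$ then $s^*(T')s(T')\le s^*(T)s(T)$, with equality only for $T=T'$); independence in $\bigotimes_k\D^{r_k}V^k$ then descends to $\Schub_w$. To complete your argument you would either need to supply the missing combinatorial matching (essentially solving the open straightening problem in this special case) or switch to a triangularity/independence argument of this kind.
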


\begin{proof} Since the BSLs are defined when $R = \Z$, and are
  compatible with extension of scalars, it is enough to show that the
  statement is true when $R = K$ is an infinite field of arbitrary
  characteristic, so we will work in this case.
 
  We can show linear independence of the BSLs following the proof of
  \cite[Theorem 7.2]{fomin}. Combined with
  Theorem~\ref{transitionfiltration}, this will show that they form a
  basis. First, we note that $\Schub_w$ has a weight decomposition
  since it is a highest weight module of $\b(n)$, so we only need to
  show linear independence of the BSLs in each weight space. Second,
  if $T$ is column-strict, row-strict, and satisfies the flagged
  conditions, then $T \ne \pm \sigma T$ in $\Schub_w$ whenever $\sigma
  \in \row(D)_T$. This implies that if $T$ is a BSL, then the image of
  $T$ under \eqref{schubertimage} is nonzero.

  First write $L = \ell(w)$. We assign to $T$ a reduced decomposition
  $s_{i_1}s_{i_2} \cdots s_{i_{L}}$ of $w$ following the method in
  Lemma~\ref{factorlemma} by induction on $L$. If $T$ contains
  unmarked letters, let $M$ be the largest such label, and let $i_L$
  denote the smallest row index which contains $M$ in a border
  cell. Let $s_{i_1} \cdots s_{i_{L - 1}}$ be the reduced
  decomposition assigned to the labeling $T \setminus (i_L, w(i_L))$
  of $ws_{i_{L}}$, so that we get the reduced decomposition $s_{i_1}
  \cdots s_{i_{L}}$ for $w$. If $T$ contains no unmarked letters, let
  $s_{i^*_1} \cdots s_{i^*_{L}}$ be the reduced decomposition
  associated to the labeling $T^*$ of $w^{-1}$ and then assign the
  reduced decomposition $s_{i^*_L} \cdots s_{i^*_1}$ to $T$. So we can
  write this reduced decomposition as $s^*(T)s(T)$ where $s^*(T)$,
  respectively $s(T)$, corresponds to the transpositions coming from
  removing marked, respectively unmarked, letters. We will totally
  order reduced decompositions as follows: $s_{i_1} \cdots s_{i_{L}} <
  s_{i'_1} \cdots s_{i'_{L}}$ if there exists a $j$ such that $i_j <
  i'_j$ and $i_k = i'_k$ for $j+1 \le k \le L$. We say that
  $s^*(T')s(T') \le s^*(T)s(T)$ if either $s(T') < s(T)$ (the ordering
  for reduced decompositions), or $s(T') = s(T)$ and $s^*(T')^{-1} <
  s(T)^{-1}$ (the inverse means write the decomposition backwards).

  Taking into account the description \eqref{labelingmap}, we show
  that if $m(T') = \pm m(\sigma T)$ where $T'$ and $T$ are BSLs and
  $\sigma \in \row(D)$, then $s^*(T')s(T') \le s^*(T)s(T)$. Note that
  since we assume that $T$ and $T'$ have the same weights, we have
  $s^*(T')s(T') = s^*(T)s(T)$ if and only if $T = T'$. The BSLs are
  linearly independent in $\bigotimes_k \D^{r_k} V^k$, so by induction
  on $\le$, we see that the coefficients of any linear dependence of
  their images in $\Schub_w$ must all be zero.

  So suppose that $m(T') = \pm m(\sigma T)$ holds and choose
  representatives $T'$ and $\sigma T$ that realize this
  equality. First suppose that $T$ contains an unmarked letter. Then
  so does $T'$, and let $M$ be the largest such one. Write $s(T) =
  s_{i_r} \cdots s_{i_L}$ and $s(T') = s_{i'_r} \cdots
  s_{i'_L}$. Since $m$ only affects entries within the same column,
  the $M$ in row $i'_L$ is moved to some row with index $\le i'_L$
  because $M$ occupies a border cell. By definition of $i_L$, all
  instances of $M$ in $\sigma T$ lie in rows with index $\ge i_L$
  since $\sigma$ is row-preserving. Hence the equality $m(T') =
  m(\sigma T)$ implies that $i'_L \le i_L$.

  If $i'_L < i_L$, there is nothing left to do. So suppose that $i'_L
  = i_L$. Then $M$ lies in the same border cell $b$ in both $T$ and
  $T'$. Hence $T' \setminus b = \sigma (T \setminus b)$, and we
  conclude by induction. So we only need to handle the case that $T$
  (and hence $T'$) do not contain any unmarked letters. In this case,
  we pass to $T^*$ and $T'^*$, and the above shows that $s^*(T')^{-1}
  \le s^*(T)^{-1}$, so we are done.
\end{proof}

The above proof does not establish how one can write the image of an
arbitrary labeling as a linear combination of the images of the
BSLs. Such a straightening algorithm is preferred, but we have not
been successful in finding one, so we leave this task as an open
problem.

\begin{problem} Find an algorithm for writing the image of an
  arbitrary labeling of $D(w)$ as a linear combination of the images
  of the BSLs of $D(w)$.
\end{problem}

\begin{corollary} Identify $x_i = -e^{\eps_i}$ and $y_i =
  -e^{\eps'_i}$ for $1 \le i \le n$. Then
  \begin{align*}
  \ch \Schub_w = \ss_w(-x,y), \quad \sch \Schub_w = \ss_w(x,y).
  \end{align*}
\end{corollary}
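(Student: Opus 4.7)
The plan is to read both characters directly off the BSL basis of Theorem~\ref{basis} and match against the formula $\ss_w(x,y) = \sum_T m(T)$ from Section~\ref{bslsection}. Each BSL $T$ gives a basis element whose weight is $\lambda_T = \sum_i f_T(i)\eps_i + \sum_i f_T(i')\eps'_i$, read directly from \eqref{schubertimage}, so $\ch \Schub_w = \sum_T e^{\lambda_T}$. Under the identifications $e^{\eps_i} = -x_i$ and $e^{\eps'_i} = -y_i$ one gets $e^{\lambda_T} = \prod_i(-x_i)^{f_T(i)}\prod_i(-y_i)^{f_T(i')}$, and summing over BSLs immediately yields $\ch \Schub_w = \ss_w(-x,y)$.

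For the supercharacter I must compute $\omega(\lambda_T)$, which first requires identifying the highest weight of $\Schub_w$. I claim it is $\Lambda = \sum_j c_j(w)\eps'_j$, realized by the BSL $T_\Lambda$ with $T_\Lambda(i,j) = j'$. That $T_\Lambda$ is a BSL is routine: row-strictness in the marked alphabet follows from the columns being distinct in each row, the flag condition is immediate, and in each hook the column stretch below the corner is constant (all $j'$) while the row stretch to its right reads $b_k', b_{k-1}', \ldots, b_1'$ with $b < b_1 < \cdots < b_k$, already weakly increasing from top-right toward the corner, so the corner label is preserved under rearrangement. Maximality of $\Lambda$ is verified box-by-box: changing the label at $(i,j)$ from $j'$ to a marked $k'$ with $k<j$ shifts the weight by $\eps'_k - \eps'_j = -\sum_{\ell=k}^{j-1}(\eps'_{\ell+1}-\eps'_\ell)$, and changing it to an unmarked $k \le i$ shifts by $\eps_k - \eps'_j = -\bigl((\eps'_j - \eps'_1) + (\eps'_1-\eps_1) + (\eps_1-\eps_k)\bigr)$, both negative sums of positive roots; summing over boxes shows $\Lambda - \lambda_T$ is a nonnegative combination of positive roots for every BSL $T$.

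Next, decomposing $\Lambda - \lambda_T = \sum_\alpha n_\alpha \alpha$ in the basis of simple roots, I extract the coefficient $c = n_{\eps'_1-\eps_1}$ of the unique odd simple root. The even simple roots $\eps_i-\eps_{i+1}$ and $\eps'_{i+1}-\eps'_i$ each have zero total $\eps$-coordinate (sum of coefficients on the $\eps_i$), whereas $\eps'_1-\eps_1$ contributes $\eps$-total $-1$. Since $\Lambda$ has no $\eps$-contribution and $\Lambda - \lambda_T$ has $\eps$-total $-\sum_i f_T(i)$, we obtain $c = \sum_i f_T(i)$ and therefore $\omega(\lambda_T) = (-1)^{\sum_i f_T(i)}$. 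Substituting,
\[
\sch \Schub_w = \sum_T (-1)^{\sum_i f_T(i)}\prod_i(-x_i)^{f_T(i)}\prod_i(-y_i)^{f_T(i')} = \sum_T\prod_i x_i^{f_T(i)}\prod_i(-y_i)^{f_T(i')} = \ss_w(x,y),
\]
using $(-1)^{f_T(i)}(-x_i)^{f_T(i)} = x_i^{f_T(i)}$ in each factor. The real obstacle is pinning down $\omega(\lambda_T)$; once the highest weight is identified the coefficient extraction via the $\eps$-count is elementary, and the rest is bookkeeping against the BSL formula for $\ss_w$.
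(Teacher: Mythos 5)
Your proof is correct and is exactly the intended derivation: the paper states this corollary without proof as an immediate consequence of Theorem~\ref{basis} (the BSL basis, whose weights are read off from the labels) together with the identity $\ss_w(x,y)=\sum_T m(T)$ from Section~\ref{bslsection}. Your careful identification of the highest weight $\Lambda=\sum_j c_j(w)\eps'_j$ and the extraction of the odd-simple-root coefficient $n_{\eps'_1-\eps_1}=\sum_i f_T(i)$ via the $\eps$-total is precisely the bookkeeping needed to justify the sign $\omega(\lambda_T)=(-1)^{\#\{\text{unmarked letters of }T\}}$ that the paper leaves implicit.
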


\begin{corollary} \label{filtration} Choose an ordering of the set of
  permutations below $w$ in the weak Bruhat order: $1 = v_1 \prec v_2
  \prec \cdots \prec v_N = w$ such that $v_i \prec v_{i+1}$ implies
  that $\ell(v_i) \le \ell(v_{i+1})$. Then there exists a
  $\b$-equivariant filtration
  \[
  0 = F_0 \subset F_1 \subset \cdots \subset F_N = \Schub_w
  \]
  such that 
  \[
  F_i / F_{i-1} \cong \Schub'_{v_i} \otimes \Schub''_{wv_i^{-1}}
  \]
  as $\b_0$-modules.
\end{corollary}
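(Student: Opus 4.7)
My plan is to combine the BSL basis of Theorem~\ref{basis} with the unique factorization of Lemma~\ref{factorlemma}, and to exploit the fact that the $\Z/2$-graded comultiplications and multiplications in~\eqref{schubertimage} are bihomogeneous in parity. Concretely, for each $v \le_W w$ set $u = wv^{-1}$ (so $\ell(u)+\ell(v)=\ell(w)$); let $B_v$ be the set of BSLs $T$ of $D(w)$ whose factorization $T = T_u v + uT_v$ from Lemma~\ref{factorlemma} has $T_u$ purely marked and $T_v$ purely unmarked; and let $F^{(v)} \subseteq \Schub_w$ be the $R$-span of the images of $B_v$. I define $F_i := \bigoplus_{j \le i} F^{(v_j)}$. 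Because $\Delta$ on $\D$ and $m$ on $\bigwedge$ in Section~\ref{linearalgebrasection} split on each bidegree, the entire map~\eqref{schubertimage} decomposes as a direct sum over parity assignments on the boxes of $D(w)$; Theorem~\ref{basis} then identifies $\Schub_w = \bigoplus_v F^{(v)}$ with this parity grading.

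Next I would verify $\b$-stability of $F_i$ by treating the even and odd parts of $\b$ separately. Even elements of $\b_0$ send unmarked basis vectors to unmarked and marked to marked, so they preserve each $F^{(v)}$. An odd element of $\b_1$ sends some $e_j$ to a combination of $e'_i$'s and kills marked basis vectors, so applied to a BSL with factorization $v$ it produces a sum of labelings with strictly fewer unmarked labels, whose associated factorizations $v'$ satisfy $\ell(v') = \ell(v)-1$. By the length-compatibility of the ordering $v_1 \prec \cdots \prec v_N$, every such $v'$ precedes $v = v_i$, so $\b_1 \cdot F^{(v_i)} \subseteq F_{i-1}$, and altogether $\b \cdot F_i \subseteq F_i$.

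Finally, $F_i/F_{i-1} \cong F^{(v_i)}$ as $\b_0$-modules (the odd part acts as zero on the associated graded), and I would identify this with $\Schub'_{v_i} \otimes \Schub''_{wv_i^{-1}}$ via the bijection $T \leftrightarrow (T_v, T_u)$ of Lemma~\ref{factorlemma}. On the fixed parity assignment determined by $v_i$, the bigraded splittings $\Delta = \sum \Delta_{a,b}$ and $m = m' \otimes m'$ from Section~\ref{linearalgebrasection} force~\eqref{schubertimage} to factor as a tensor product of the single Schubert map applied to the unmarked sub-diagram $uD(v_i)$ (built from $V_0$) with the marked-version single Schubert map applied to $D(u)v_i$ (built from $V_1$); the decomposition $\b_0 = (\b_0 \cap \gl(V_0)) \oplus (\b_0 \cap \gl(V_1))$ then acts on the two tensor factors separately. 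The main obstacle I expect is the diagram identification in this last step: one must verify that the sub-diagrams $uD(v_i)$ and $D(u)v_i$ produce exactly $\Schub'_{v_i}$ and $\Schub''_{u}$, respectively, which reduces to checking that the Schubert functor construction is invariant under the relevant row/column permutations in a way compatible with the flag structure on $V_0$ and $V_1$.
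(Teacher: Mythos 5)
Your proposal is correct and follows essentially the same route as the paper: the paper likewise uses Theorem~\ref{basis} to obtain a $\b_0$-equivariant decomposition $\Schub_w = \bigoplus_{v \le_W w} \Schub'_v \otimes \Schub''_{wv^{-1}}$ indexed by the factorizations of Lemma~\ref{factorlemma}, defines $F_i$ as the partial sums, and observes that odd elements of $\b$ strictly decrease the number of unmarked letters so that, by the length-compatibility of the ordering, they map each summand into earlier ones. The identification of each graded piece with $\Schub'_{v_i} \otimes \Schub''_{wv_i^{-1}}$, which you flag as the remaining obstacle, is exactly what the paper takes from Theorem~\ref{basis} without further elaboration.
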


\begin{proof} 
  Let $\Schub'$, respectively $\Schub''$, denote the usual Schubert
  functor which uses only unmarked, respectively marked, letters. Let
  $W_v = \Schub'_v \otimes \Schub''_{wv^{-1}}$. Theorem~\ref{basis}
  implies that we have a $\b_0$-equivariant decomposition $\Schub_w =
  \bigoplus_{v \le_W w} W_v$. Let $F_i = \bigoplus_{j \le i}
  W_{v_j}$. Then $F_i$ is a $\b_0$-submodule, and applying an element
  of $\b \setminus \b_0$ to $W_{v_j}$ can only give elements in
  $W_{v_k}$ where $\ell(v_k) < \ell(v_j)$. So $F_i$ is in fact a
  $\b$-submodule, and we have the desired filtration.
\end{proof}

\section{Schubert complexes.} \label{complexsection}

Now we can use the above machinery to define Schubert complexes. We
start with the data of two split flags $F^\bullet_0 : 0 = F^0_0
\subset F^1_0 \subset \cdots \subset F^n_0 = F_0$ and $F^\bullet_1 :
F^{-n}_1 \subset F^{-n+1}_1 \subset \cdots \subset F^{-1}_1 = F_1$,
and a map $\partial \colon F_0 \to F_1$ between them. Given the flag
for $F_0$, we pick an ordered basis $\{ e_1, \dots, e_n \}$ for it
such that $e_i \in F_0^i \setminus F_0^{i-1}$. Similarly, we pick an
ordered basis $\{ e'_1, \dots, e'_n \}$ for $F_1$ such that $e'_i \in
F_1^{-i} \setminus F_1^{-i-1}$. Given these bases, we can represent
$\partial$ as a matrix. This matrix representation will be relevant
for the definition of certain ideals later.

Equivalently, we can give $F^\bullet_1$ as a quotient flag $F_1 = G^n
\surj G^{n-1} \surj \cdots \surj G^1 \surj G^0 = 0$, so that the
correspondence is given by $F^{-i}_1 = \ker(G^n \surj G^{i-1})$. Note
that $F_1^{-i} / F_1^{-i-1} = \ker(G^i \surj G^{i-1})$. We assume that
each quotient has rank 1. Then we form a flagged $\Z/2$-graded module
$F$ with even part $F_0$ and odd part $F_1$. The formation of divided
and exterior products commutes with the differential $\partial$ by
functoriality, so we can form the {\bf Schubert complex} $\Schub_w(F)$
for a permutation $w \in \Sigma_n$.

\begin{proposition} \label{complexterms} The $i$th term of
  $\Schub_w(F)$ has a natural filtration whose associated graded is
  \[
  \bigoplus_{\substack{v \le_W w \\ \ell(v) = i }} \Schub_v(F_0)
  \otimes \Schub_{wv^{-1}}(F_1).
  \]
\end{proposition}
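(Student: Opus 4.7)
The plan is to apply Corollary~\ref{filtration} and restrict the filtration it produces to each $\Z$-graded component of $\Schub_w(F)$. Under the paper's conventions, the $i$-th term of the Schubert complex consists of those elements with $i$ even factors (from $F_0$) and $\ell(w)-i$ odd factors (from $F_1$); in the $\Z$-grading on $\bigwedge^\bullet$ and $\D^\bullet$, which counts odd factors, this is the piece in degree $\ell(w)-i$.

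The key observation is that each summand $W_v = \Schub'_v(F_0) \otimes \Schub''_{wv^{-1}}(F_1)$ appearing in the decomposition of Corollary~\ref{filtration} is concentrated in a single $\Z$-degree. Indeed, $\Schub'_v$ uses only unmarked (hence even) letters, contributing nothing to the odd-factor count, while $\Schub''_{wv^{-1}}$ uses only marked (hence odd) letters, contributing exactly $\ell(wv^{-1}) = \ell(w) - \ell(v)$ odd factors. Hence $W_v$ lies entirely in $\Z$-degree $\ell(w) - \ell(v)$.

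Taking the $\b$-equivariant filtration of Corollary~\ref{filtration} and intersecting each of its steps with the degree $\ell(w)-i$ component of $\Schub_w(F)$ therefore yields a filtration of the $i$-th term of the complex. Because each $W_v$ lives in one $\Z$-degree, the associated graded of this restricted filtration is precisely the direct sum of those $W_v$ with $\ell(w) - \ell(v) = \ell(w) - i$, that is, those with $\ell(v) = i$. Naturality in $F$ is inherited directly from Corollary~\ref{filtration}. The only point requiring care is the degree bookkeeping above, namely confirming that each $W_v$ lies in a single $\Z$-degree; once this is recorded, the proposition amounts to restricting a known filtration to a graded piece and no additional combinatorial input is needed.
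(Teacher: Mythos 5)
Your proof is correct and follows the same route as the paper, which simply cites Corollary~\ref{filtration}; you have merely made explicit the degree bookkeeping (each $W_v = \Schub'_v(F_0) \otimes \Schub''_{wv^{-1}}(F_1)$ sits in a single homological degree, namely $\ell(v)$, since $\ell(wv^{-1}) = \ell(w) - \ell(v)$ for $v \le_W w$) that the paper leaves implicit.
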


\begin{proof} This is a consequence of
  Corollary~\ref{filtration}. \end{proof}

\begin{proposition} \label{complexfiltration} Let $\partial \colon F_0
  \to F_1$ be a map. With the notation as in
  Theorem~\ref{transitionfiltration}, there is a functorial
  $\b$-equivariant filtration of complexes
  \[
  0 = C_0 \subset C_1 \subset \cdots \subset C_k \subset C' \subset C
  = \Schub_w(\partial)
  \]
  such that $C / C' \cong \Schub_v(\partial)[-1] \otimes F_0^\alpha /
  F_0^{\alpha-1}$, $C' / C_k \cong \Schub_v(\partial) \otimes
  F_1^{-w(\beta)} / F_1^{-w(\beta)+1}$, and $C_t / C_{t-1} \cong
  \Schub_{\psi_t}(\partial)$ for $t=1,\dots,k$.
\end{proposition}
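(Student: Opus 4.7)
The plan is to lift the $\b$-equivariant filtration of Theorem~\ref{transitionfiltration} from the module category to the category of complexes. Recall that $\Schub_w(\partial)$ is obtained by applying the functor $\Schub_w$ to the $\Z/2$-graded flagged module $F_0 \oplus F_1$, with differential induced from $\partial$ via functoriality. Since $\partial$ respects the flags, it corresponds to an odd element of $\b$, and its action on $\Schub_w$ through the $\b$-module structure coincides with the induced differential. Hence every $\b$-invariant subspace is automatically a subcomplex, and the filtration $0 = F_0 \subset F_1 \subset \cdots \subset F_k \subset F' \subset F$ from Theorem~\ref{transitionfiltration} directly gives the desired filtration $0 = C_0 \subset C_1 \subset \cdots \subset C_k \subset C' \subset C$ by subcomplexes.

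First I would observe that because the isomorphisms on subquotients in Theorem~\ref{transitionfiltration} are $\b$-equivariant, they commute with the action of $\partial$ and therefore upgrade to isomorphisms of complexes. For $C_t / C_{t-1}$ the identification with $\Schub_{\psi_t}(\partial)$ requires no grading adjustment, since $\ell(\psi_t) = \ell(w)$ and $\Schub_{\psi_t}(\partial)$ occupies the same range of homological degrees as $\Schub_w(\partial)$.

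Next I would analyze the two outer quotients. Both $F_0^\alpha / F_0^{\alpha-1}$ and $F_1^{-w(\beta)} / F_1^{-w(\beta)+1}$ are one-dimensional: the former is purely even, the latter purely odd, and in both cases the induced differential on the one-dimensional factor vanishes. Thus the complex structure on the tensor product comes entirely from $\Schub_v(\partial)$. The shift $[-1]$ appearing in $C / C'$ reflects that the even factor contributes $0$ to the $F_1$-count used for the homological grading, whereas in $C' / C_k$ the odd factor $F_1^{-w(\beta)} / F_1^{-w(\beta)+1}$ already contributes $1$, so no shift is necessary there.

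The main obstacle will be the careful verification that the explicit morphisms $\phi$, $\phi'$, and the inclusion defining $F''$ in the proof of Theorem~\ref{transitionfiltration} genuinely commute with $\partial$, so that the subquotient identifications are isomorphisms of complexes rather than merely of modules. This should follow from the $\b$-equivariance statements already established there, combined with the functoriality of divided and exterior powers used in \eqref{schubertimage}, but one must take care with the signs and grading conventions coming from the $\Z/2$-graded tensor products.
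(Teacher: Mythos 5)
Your proposal is correct and takes essentially the same route as the paper's (two-line) proof: the differential of $\Schub_w(\partial)$ is the action of $\partial$ regarded as an odd element of $\b$, so the $\b$-equivariant filtration of Theorem~\ref{transitionfiltration} and its $\b$-equivariant subquotient isomorphisms automatically respect the differential and yield a filtration by subcomplexes. One correction to your discussion of the shift: by Proposition~\ref{complexterms} the homological degree is the number of $F_0$ (unmarked) letters, not an ``$F_1$-count,'' so tensoring with the even line $F_0^\alpha/F_0^{\alpha-1}$ raises the homological degree by one (whence the $[-1]$) while the odd line $F_1^{-w(\beta)}/F_1^{-w(\beta)+1}$ leaves it unchanged --- your stated reasoning has this inverted, even though the conclusion you reach is the right one.
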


\begin{proof} The filtration of Theorem~\ref{transitionfiltration}
  respects the differentials since everything is defined in terms of
  multilinear operations. The grading shift of $C/C'$ follows from the
  fact that the $F_0$ terms have homological degree 1.
\end{proof}

\begin{corollary} Let $\partial \colon F_0 \to F_1$ be a flagged
  isomorphism. Then $\Schub_w(\partial)$ is an exact complex whenever $w
  \ne 1$.
\end{corollary}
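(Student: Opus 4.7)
The plan is to induct on $w$ using the lexicographic order on the pair $(\ell(w), \mathrm{index}(w))$, where the index is the quantity defined just before Theorem~\ref{transitionfiltration}. The point of this order is that in the maximal transition used by Proposition~\ref{complexfiltration}, the permutation $v = w t_{\alpha,\beta}$ satisfies $\ell(v) = \ell(w) - 1$, while each $\psi_t$ has $\ell(\psi_t) = \ell(w)$ but strictly smaller index than $w$; moreover, since $\ell(\psi_t) = \ell(w) \ge 1$, every $\psi_t$ is itself nonidentity. So the inductive hypothesis applies both to $v$ (when $v \ne 1$) and to each $\psi_t$.

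The base case is $w = s_1$, the smallest nonidentity permutation in this order. For $s_1$ the maximal transition has $v = 1$ and no $\psi$-terms, so Proposition~\ref{complexfiltration} presents $\Schub_{s_1}(\partial)$ as the two-term complex $F_0^1/F_0^0 \to F_1^{-1}/F_1^{-2}$, whose differential is the map on rank-$1$ graded pieces induced by $\partial$. Since $\partial$ is a flagged isomorphism, this induced map is an isomorphism of free rank-$1$ modules, and the complex is exact.

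For the inductive step I would apply Proposition~\ref{complexfiltration} and handle the three families of subquotients separately. The subcomplexes $\Schub_{\psi_t}(\partial)$ are acyclic by the inductive hypothesis, so $C_k$ is acyclic. For the top piece $C/C_k$, which is an extension of $\Schub_v(\partial)[-1] \otimes L_0$ by $\Schub_v(\partial) \otimes L_1$ with $L_0 = F_0^\alpha/F_0^{\alpha-1}$ and $L_1 = F_1^{-w(\beta)}/F_1^{-(w(\beta)+1)}$, there are two cases. If $v \ne 1$, then $\Schub_v(\partial)$ is acyclic by induction and tensoring with the free rank-$1$ modules $L_0$ and $L_1$ preserves acyclicity, so both ends of the extension are acyclic and hence so is $C/C_k$. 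If $v = 1$, then $\Schub_v(\partial) = R$ concentrated in homological degree $0$, so $C/C_k$ reduces to the two-term complex $L_0 \to L_1$ with differential induced by $\partial$, and the flagged isomorphism hypothesis forces this map of rank-$1$ free modules to be an isomorphism. In either case $C/C_k$ is acyclic, and combining with acyclicity of $C_k$ gives acyclicity of $C = \Schub_w(\partial)$.

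The main obstacle will be the identification, in the $v = 1$ case, of the connecting differential $L_0 \to L_1$ with the map induced by $\partial$ on the graded quotients $F_0^\alpha/F_0^{\alpha-1} \to F_1^{-w(\beta)}/F_1^{-(w(\beta)+1)}$. This requires unpacking the maps $\phi'$ and $\phi$ from the proof of Proposition~\ref{complexfiltration} to verify that the boundary on associated graded really is this restriction of $\partial$, and then invoking the ``flagged isomorphism'' hypothesis to conclude that the resulting map between rank-$1$ free modules is an isomorphism. Once this identification is in place, everything else is a routine filtration argument using that the mapping cone of any morphism between acyclic complexes is acyclic.
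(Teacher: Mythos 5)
Your proposal is correct and follows essentially the same route as the paper: induction on length and then on index, the filtration of Proposition~\ref{complexfiltration}, and the long exact sequence on homology, with the base case being a simple transposition where the complex reduces to a rank-one piece of $\partial$. The only detail worth making explicit in your $v=1$ case is that $v=1$ forces $w$ to be a simple transposition $s_\alpha$ with $w(\beta)=\alpha$, so the two-term complex $F_0^\alpha/F_0^{\alpha-1}\to F_1^{-w(\beta)}/F_1^{-w(\beta)-1}$ is a \emph{diagonal} graded piece of $\partial$, which is exactly what the flagged-isomorphism hypothesis makes invertible.
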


\begin{proof} This is an immediate consequence of
  Proposition~\ref{complexfiltration} using induction on length and
  index, and the long exact sequence on homology: when $w = s_i$,
  exactness is obvious.
\end{proof}

\subsection{Flag varieties and K-theory.} \label{section:ktheory}

Throughout this section, we use \cite{intersection} as a
reference. The reader may wish to see \cite[Appendix B.1,
B.2]{intersection} for the conventions used there.

We will need some facts about the geometry of flag varieties. Let
$V$ be a vector space with ordered basis $\{e_1, \dots, e_n\}$. Then
the complete flag variety $\Flag(V)$ can be identified with $\GL(V)/B$
where $B$ is the subgroup of upper triangular matrices with respect to
the given basis. For a permutation $w \in \Sigma_n$, we define the
{\bf Schubert cell} $\Omega_w$ to be the $B$-orbit of the flag
\[
\langle e_{w(1)} \rangle \subset \langle e_{w(1)}, e_{w(2)} \rangle
\subset \cdots \subset \langle e_{w(1)}, \dots, e_{w(n-1)} \rangle
\subset V.
\]
Then $\Omega_w$ is an affine space of dimension $\ell(w)$ (see
\cite[\S 3.6]{manivel}), and the flag variety is a disjoint union of
the $\Omega_w$. The {\bf Schubert variety} $X_w$ is the closure of
$\Omega_w$. Alternatively,
\[
X_w = \{ W_\bullet \in \Flag(V) \mid \dim(W_p \cap \langle e_1, \dots,
e_q \rangle) \ge r_w(p,q) \}.
\]

Recall from Section~\ref{section:preliminaries} that $r_w(p,q) = \#\{i
\le p \mid w(i) \le q\}$. Given a matrix $\partial$ and a permutation
$w$, let $I_w(\partial)$ be the ideal generated by the $(r_w(p,q)+1)
\times (r_w(p,q)+1)$ minors of the upper left $p \times q$ submatrix
of $\partial$. It is clear that $I_v \subseteq I_w$ if and only if $v
\le w$. In the case that $\partial$ is a generic matrix of variables
over some coefficient ring $R$, let $X(w)$ be the variety defined by
$I_w(\partial) \subset R[\partial_{i,j}]$. We refer to the ideals
$I_w(\partial)$ as {\bf Schubert determinantal ideals}, and the
varieties $X(w)$ as {\bf matrix Schubert varieties}.  Given a
permutation $w$, we say that a cell $\alpha$ in the diagram $D(w)$ is
a {\bf southeast corner} if the cells to the immediate right of
$\alpha$ and immediately below $\alpha$ do not belong to $D(w)$.

\begin{theorem} \label{theorem:matrixschubert}
  Let $\partial$ be a generic matrix defined over a field, and let $w$
  be a permutation.
  \begin{compactenum}[\rm (a)]
  \item \label{item:southeastgens} $I_w(\partial)$ is generated by the
    minors coming from the submatrices whose lower right corner is a
    southeast corner of $D(w)$.
  \item \label{item:schubertprime} $I_w(\partial)$ is a prime ideal of
    codimension $\ell(w)$.
  \item \label{item:normalCM} $X(w)$ is a normal variety.
  \end{compactenum}
\end{theorem}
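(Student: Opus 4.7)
The plan is to follow Fulton's original argument from the 1992 Duke paper, deducing all three statements from the realization of $X(w)$ as an orbit closure in the matrix space together with classical properties of Schubert varieties in $\Flag(V)$.

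For parts (b) and (c), I would exhibit an analog of the Bruhat decomposition on matrices. The group $B \times B_-$ of pairs of upper-triangular and lower-triangular invertible matrices acts on $n \times n$ matrices by $(b_+, b_-) \cdot A = b_+ A b_-^{-1}$, and one checks that the orbits are indexed by partial permutation matrices, with the $B \times B_-$-orbit of the permutation matrix of $w$ open inside the stratum of matrices having rank exactly $r_w(p,q)$ in each northwest $p \times q$ block. Taking closures gives irreducibility of $X(w)$ and the codimension count $\ell(w)$, so the radical of $I_w(\partial)$ is prime of the correct codimension. To upgrade to primality of $I_w(\partial)$ itself I would argue that the defining scheme is reduced. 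Normality in (c) can then be obtained either by Frobenius splitting of $X(w)$, by exhibiting $X(w)$ as an iterated affine bundle over a Schubert variety $X_w \subset \Flag(V)$ and invoking Ramanathan's normality theorem for Schubert varieties in all characteristics, or by constructing an explicit Bott--Samelson-type resolution and checking that its direct image has no higher cohomology.

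For (a), Fulton's essential-set theorem, the plan is purely combinatorial. For every $(i,j) \in D(w)$ that is not a southeast corner, by definition there is a neighboring cell in $D(w)$ either immediately to the east or immediately to the south, which forces $r_w(i, j+1) = r_w(i,j)$ or $r_w(i+1, j) = r_w(i,j)$, respectively. I would then express the $(r_w(i,j)+1)$-minors of the top-left $i \times j$ submatrix in terms of the $(r_w(i, j+1)+1)$- or $(r_w(i+1, j)+1)$-minors by Laplace expansion along the added row or column, placing each non-essential minor into the ideal generated by minors coming from cells strictly closer to a southeast corner. Induction on the combinatorial distance to the southeast corner locus of $D(w)$ then shows that the southeast-corner minors already generate $I_w(\partial)$.

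The main obstacle I anticipate is the primality claim in (b): irreducibility of $X(w)$ and the codimension computation fall out quickly from the $B \times B_-$-orbit decomposition, but to conclude that $I_w(\partial)$ is prime and not merely its radical requires either an explicit Gr\"obner basis argument in the spirit of Knutson--Miller or a depth calculation coupled to the normality statement (c). At that point the three parts of the theorem become subtly intertwined, and one must be careful about the order in which they are proved; the cleanest modern route is probably to establish a Gr\"obner basis for $I_w(\partial)$ under a diagonal term order first and read off both primality and the essential-set generation simultaneously.
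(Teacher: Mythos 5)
The paper does not prove this theorem from scratch: parts \eqref{item:southeastgens} and \eqref{item:schubertprime} are cited directly from Miller--Sturmfels \cite[Chapter 15]{cca}, and part \eqref{item:normalCM} is obtained by realizing $X(w)$ as a product of an affine space with an open subset of a Schubert variety in $\Flag(V)$ and invoking Ramanan--Ramanathan \cite{ramanan}. Your treatment of \eqref{item:normalCM} therefore coincides with the paper's (it is the second of the three options you list), and your concluding remark for \eqref{item:schubertprime} --- that the clean route is a Gr\"obner basis for $I_w(\partial)$ under a diagonal term order, from which primality and the essential-set generation fall out together --- is precisely the Knutson--Miller argument that the cited chapter of \cite{cca} presents. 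So your plan is aligned with the actual sources; the honest difference is that you are sketching the mathematics the paper outsources.

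Two cautions on the details. First, your orbit-theoretic argument for \eqref{item:schubertprime} only yields that the \emph{radical} of $I_w(\partial)$ is prime of codimension $\ell(w)$; as you yourself note, reducedness of the scheme cut out by $I_w(\partial)$ does not follow from the $B\times B_-$ decomposition and genuinely requires the Gr\"obner (or an equivalent) argument, so the orbit decomposition cannot stand alone. Second, your induction for \eqref{item:southeastgens} is not quite right as stated. When a cell $(i,j)\in D(w)$ has a neighbor $(i,j+1)\in D(w)$ one has $r_w(i,j+1)=r_w(i,j)$, and then the $(r_w(i,j)+1)$-minors of the $i\times j$ block are literally a subset of the same-size minors of the $i\times(j+1)$ block --- no Laplace expansion is needed or even meaningful, since the minor size does not change. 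The real content of the essential-set reduction is elsewhere: $I_w(\partial)$ is generated by minors attached to \emph{all} positions $(p,q)$, including those outside $D(w)$ where $r_w$ can jump as one moves toward a corner, and there the naive containment fails (an $(r+1)$-minor is not obviously in the ideal generated by $(r+2)$-minors of a larger block). Handling those positions is Fulton's combinatorial lemma, and promoting it from a statement about rank conditions to a statement about ideal generators is again most cleanly done via the Gr\"obner basis. So in the end all of \eqref{item:southeastgens} and \eqref{item:schubertprime} collapse onto the Knutson--Miller machinery, which is consistent with the order of proof you anticipate at the end of your proposal.
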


\begin{proof} See \cite[Chapter 15]{cca} for
  \eqref{item:southeastgens} and \eqref{item:schubertprime}. For
  \eqref{item:normalCM}, we can realize $X(w)$ as a product of an
  affine space with an open subset of a Schubert variety in the
  complete flag variety (see Step 2 of the proof of
  Theorem~\ref{acyclicschubert} for more details), so it is enough to
  know that Schubert varieties are normal. This is proven in
  \cite[Theorem 3]{ramanan}.

  See also \cite[Theorem 2.4.3]{knutson} for more about the
  relationship of local properties for Schubert varieties and local
  properties of a product of matrix Schubert varieties with affine
  spaces.
\end{proof}

Given any scheme $X$, we let ${\rm K}(X)$ denote the K-theory of
coherent sheaves on $X$. This is the free Abelian group generated by
the symbols $[\FF]$ for each coherent sheaf $\FF$ modulo the relations
$[\FF] = [\FF'] + [\FF'']$ for each short exact sequence of the form
\[
0 \to \FF' \to \FF \to \FF'' \to 0.
\]
Given a finite complex $\CC_\bullet$ of coherent sheaves, we set
$[\CC_\bullet] = \sum_i (-1)^i [\CC_i] = \sum_i (-1)^i
[\Hs_i(\CC_\bullet)]$. If $X$ is nonsingular and finite-dimensional,
then ${\rm K}(X)$ has a ring structure given by 
\[ [\FF][\FF'] = \sum_{i=0}^{\dim X} (-1)^i [{\cal T}{\rm
  or}_i^{\OO_X}(\FF, \FF')].
\]

Now suppose that $X$ is an equidimensional smooth quasi-projective
variety over $K$. For $k \ge 0$, let $F^k {\rm K}(X)$ be the subgroup
of ${\rm K}(X)$ generated by coherent sheaves whose support has
codimension at least $k$, and set $\gr^k {\rm K}(X) = F^k {\rm K}(X) /
F^{k+1} {\rm K}(X)$. This filtration is compatible with the ring
structure on ${\rm K}(X)$ \cite[Th\'eor\`eme 2.12, Corollaire
1]{grothendieck}, and we set $\gr {\rm K}(X) = \bigoplus_{k \ge 0}
\gr^k {\rm K}(X)$ to be the associated graded ring.

Let ${\rm A}^*(X)$ be the Chow ring of $X$. We identify this with the
direct sum of Chow groups ${\rm A}_*(X)$ of $X$ via the isomorphism $c
\mapsto c \cap [X]$. Let $\phi \colon {\rm A}^*(X) \to \gr {\rm K}(X)$
be the functorial morphism of graded rings which for a closed
subvariety $V \subseteq X$ sends $[V]$ to $[\OO_V]$. If $\FF$ is a
coherent sheaf whose support has codimension at least $k$, then we
have $\phi(Z^k(\FF)) = [\FF]$ as elements of $\gr^k {\rm K}(X)$ where
\begin{align} \label{eqn:multiplicity}
Z^k(\FF) = \sum_{\codim V = k} m_V(\FF)[V],
\end{align}
and $m_V(\FF)$ is the length of the stalk of $\FF$ at the generic
point of $V$. We will need to know later that $\phi$ becomes an
isomorphism after tensoring with $\Q$. See \cite[Example 15.1.5,
15.2.16]{intersection} for more details. For a finite complex of
vector bundles $C_\bullet$ such that $[C_\bullet] \in F^k {\rm K}(X)$,
we use $[C_\bullet]_k$ to denote the corresponding element of $\gr^k
{\rm K}(X)$.

\begin{lemma} \label{lemma:schubcalc} The identity $\phi(\ss_w(x,y)) =
  [C_\bullet]_{\ell(w)}$ holds.
\end{lemma}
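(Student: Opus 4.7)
The plan is to proceed by double induction, first on $\ell(w)$ and then, for fixed length, on the index of $w$ (defined just before \eqref{maxtransition}). The base case $w = 1$ is immediate: $\Schub_1(\partial) = \OO_X$ concentrated in homological degree zero, so $[C_\bullet] = 1 = \phi(1) = \phi(\ss_1(x,y))$.

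For the inductive step, I would apply the maximal transition \eqref{maxtransition}
\[
\ss_w = \ss_v(x_\alpha - y_{w(\beta)}) + \sum_{t=1}^k \ss_{\psi_t},
\]
where $\ell(v) = \ell(w)-1$ (forced by maximality of $(\alpha,\beta)$ in the lex order) and each $\psi_t$ has length $\ell(w)$ but strictly smaller index than $w$. Proposition~\ref{complexfiltration} produces a filtration of $C_\bullet = \Schub_w(\partial)$ whose associated graded pieces are $\Schub_v(\partial)[-1] \otimes L_0$ with $L_0 = F_0^\alpha/F_0^{\alpha-1}$, then $\Schub_v(\partial) \otimes L_1$ with $L_1 = F_1^{-w(\beta)}/F_1^{-w(\beta)+1}$, and finally $\Schub_{\psi_t}(\partial)$ for $t=1,\dots,k$. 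Taking Euler characteristics in ${\rm K}(X)$, where the homological shift $[-1]$ contributes a sign of $-1$, gives
\[
[C_\bullet] = [\Schub_v(\partial)] \cdot \bigl([L_1] - [L_0]\bigr) + \sum_{t=1}^k [\Schub_{\psi_t}(\partial)].
\]

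Passing to $\gr^{\ell(w)} {\rm K}(X)$, the inductive hypothesis gives $[\Schub_v(\partial)]_{\ell(v)} = \phi(\ss_v)$ and $[\Schub_{\psi_t}(\partial)]_{\ell(w)} = \phi(\ss_{\psi_t})$, once one checks that each of these classes lies in the expected filtration level; this rests on the main acyclicity result of Section~\ref{complexsection}, since $\Schub_u(\partial)$ then resolves a coherent sheaf whose support has codimension $\ell(u)$. The difference $[L_1] - [L_0]$ lies in $F^1 {\rm K}(X)$ and represents $\phi(c_1(L_1) - c_1(L_0))$ modulo $F^2$, using the standard identification $\phi(c_1(L)) \equiv [L]-1$; under the paper's sign convention identifying the formal variables $x_i, y_i$ with the (appropriately signed) first Chern classes of $F_0^i/F_0^{i-1}$ and $F_1^{-i}/F_1^{-i-1}$, this equals $\phi(x_\alpha - y_{w(\beta)})$. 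Multiplying in $\gr {\rm K}(X)$ and assembling the terms yields
\[
[C_\bullet]_{\ell(w)} = \phi\bigl(\ss_v(x_\alpha - y_{w(\beta)}) + \sum_{t=1}^k \ss_{\psi_t}\bigr) = \phi(\ss_w),
\]
completing the induction.

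The main obstacle is the filtration bookkeeping. One must verify that each class $[\Schub_u(\partial)]$ really lies in $F^{\ell(u)} {\rm K}(X)$ (so that multiplication in $\gr {\rm K}(X)$ behaves as expected), which requires appealing to the Schubert-complex acyclicity theorem and the support codimension of the cokernel. One must also carefully reconcile the sign conventions so that the $-1$ introduced by the homological shift in Proposition~\ref{complexfiltration} aligns with the combination $[L_1] - [L_0]$ and ultimately with the factor $(x_\alpha - y_{w(\beta)})$ in the maximal transition; any discrepancy here would propagate through the induction.
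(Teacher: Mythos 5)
Your route is genuinely different from the paper's, but as written it contains a circularity that is fatal given how the paper is organized. You invoke ``the main acyclicity result of Section~\ref{complexsection}'' (Theorem~\ref{acyclicschubert}) to guarantee that $[\Schub_v(\partial)]$ and $[\Schub_{\psi_t}(\partial)]$ lie in $F^{\ell(v)}{\rm K}(X)$ and $F^{\ell(w)}{\rm K}(X)$ respectively. But Lemma~\ref{lemma:schubcalc} is an input to the proof of that acyclicity theorem: Step 2 of the proof of Theorem~\ref{acyclicschubert} uses Corollary~\ref{corollary:ktheory}, which is deduced from this very lemma. So you cannot use acyclicity here. The gap is repairable without acyclicity --- strengthen the inductive hypothesis to include the assertion $[\Schub_u(\partial)] \in F^{\ell(u)}{\rm K}(X)$; then the filtration-level claims for $v$ and the $\psi_t$ come from the induction, and the claim for $w$ follows from your identity $[C_\bullet] = [\Schub_v(\partial)]\bigl([L_1]-[L_0]\bigr) + \sum_t [\Schub_{\psi_t}(\partial)]$ together with $[L_1]-[L_0] \in F^1$ and the compatibility of the filtration with products. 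But you must make that repair explicit; leaning on Theorem~\ref{acyclicschubert} is not an option.

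For comparison, the paper avoids induction and homology entirely. It computes $[C_\bullet] = \sum_i (-1)^i[\CC_i]$ directly from Proposition~\ref{complexterms}, which decomposes the terms of the complex into tensor products $\Schub_u(F_0) \otimes \Schub_{wu^{-1}}(F_1)$ over $u \le_W w$; the identity \eqref{induction}, $\ss_w(a,b) = \sum_{u \le_W w} \ss_u(a)\ss_{uw^{-1}}(-b)$, matches this alternating sum with $\ss_w$ after the substitution $a_i \mapsto x_i - 1$, $b_j \mapsto y_j - 1$, and the final step is the observation that $\ss_w(a,b)$ is invariant under the simultaneous shift $a_i \mapsto a_i+1$, $b_j \mapsto b_j+1$ (clear for $w_0$ and preserved by divided differences). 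That argument uses only the K-classes of the terms, never the homology of the complex, which is exactly what makes it safe to use inside the acyclicity proof. Your inductive scheme via maximal transitions would buy a proof parallel in structure to the acyclicity argument itself, but only once the filtration bookkeeping is done self-containedly.
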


\begin{proof}
For a line bundle $L$ of the form $\OO(D)$ where $D$ is an irreducible
divisor, we have $c_1(L) \cap [X] = [D]$ \cite[Theorem
3.2(f)]{intersection}. Hence
\[
\phi(c_1(L) \cap [X]) = (1 - [L^\vee])_1 \in \gr^1 {\rm K}(X)
\]
by the short exact sequence
\[
0 \to \OO(-D) \to \OO_X \to \OO_D \to 0.
\]
So the same formula holds for all $L$ by linearity, and 
\[
\phi(x_i) = 1 - [E_i / E_{i-1}], \quad \phi(y_j) = 1 - [\ker(F_j \surj
F_{j-1})].
\]
Let $a$ and $b$ be a new set of variables. We have $\ss_w(a,b) =
\sum_{u \le_W w} \ss_u(a) \ss_{uw^{-1}}(-b)$ by
\eqref{induction}. Doing the transformation $a_i \mapsto x_i - 1$ and
$b_j \mapsto y_j - 1$, we get 
\[
\phi(\ss_w(a,b)) = \sum_{u \le_W w} (-1)^{\ell(u)} \ss_u(E)
\ss_{uw^{-1}}(F).
\]
By Proposition~\ref{complexterms}, this sum is $[\Schub_w(\partial)]$
(the change from $uw^{-1}$ to $wu^{-1}$ is a consequence of the fact
that $F_1$ in Proposition~\ref{complexterms} contains only odd
elements). So it is enough to show that the substitution $a_i \mapsto
a_i + 1$, $b_j \mapsto b_j + 1$ leaves the expression $\ss_w(a,b)$
invariant. This is clearly true for $\ss_{w_0}(x,y) = \prod_{i + j \le
  n} (x_i - y_j)$, and holds for an arbitrary permutation because the
divided difference operators (see \eqref{divideddiff}) applied to a
substitution invariant function yield a substitution invariant
function.
\end{proof}

The flag variety $\Flag(V)$ is smooth, and its K-theory is freely
generated as a group by the structure sheaves $[\OO_{X_w}]$ (see
\cite[Examples 1.9.1, 15.2.16]{intersection}). Also, the irreducible
components of any $B$-equivariant subvariety of $\Flag(V)$ must be
Schubert varieties. There is a tautological flag of subbundles
\[
0 = \RR_0 \subset \RR_1 \subset \RR_2 \subset \cdots \subset \RR_{n-1}
\subset \RR_n = V \times \Flag(V)
\]
on $\Flag(V)$, where the fiber of $\RR_i$ over a flag $W_\bullet$ is
the space $W_i$. Setting $x_i = -c_1(\RR_i / \RR_{i-1})$, the Schubert
polynomial $\ss_w(x_1, \dots, x_n)$ represents the Poincar\'e dual of
the Schubert variety $X_{w_0w}$ (see, for example, \cite[Theorem
3.6.18]{manivel}).

\begin{corollary} \label{corollary:ktheory} Let $V$ be an
  $n$-dimensional vector space and let $\CC$ be the Schubert complex
  associated with the permutation $w$ and the identity map of $V
  \times \Flag(V)$ to itself, where the subspace flag consists of the
  tautological subbundles and the quotient flag consists of trivial
  vector bundles. Then $[\CC]_{\ell(w)} = [\OO_{X_{w_0w}}]$ in
  $\gr^{\ell(w)} {\rm K}(\Flag(V))$.
\end{corollary}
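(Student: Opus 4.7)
The plan is to apply Lemma~\ref{lemma:schubcalc} to replace $[\CC]_{\ell(w)}$ with $\phi(\ss_w(x,y))$, then exploit the very special flag data of the corollary to reduce $\ss_w(x,y)$ to a single Schubert polynomial representing $[X_{w_0w}]$, and finally invoke the defining property of $\phi$ on cycle classes.

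First, I would specialize Lemma~\ref{lemma:schubcalc} to the situation at hand: the even flagged bundle is $F_0 = V \times \Flag(V)$ with subspace flag $E_i = \RR_i$, and $F_1 = V \times \Flag(V)$ with quotient flag given by trivial bundles. From the proof of Lemma~\ref{lemma:schubcalc}, the substitutions are $\phi(x_i) = 1 - [\RR_i/\RR_{i-1}]$ and $\phi(y_j) = 1 - [\ker(F_j \twoheadrightarrow F_{j-1})]$. Since the quotient flag is trivial, each kernel is the trivial line bundle $\OO_{\Flag(V)}$, so $\phi(y_j) = 0$ in $\gr^1 {\rm K}(\Flag(V))$. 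Thus the specialization of $\ss_w(x,y)$ that matters is $\ss_w(x,0) = \ss_w(x)$, the single Schubert polynomial.

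Second, I would match the Chow-ring substitution implicit in Lemma~\ref{lemma:schubcalc} with the substitution $x_i = -c_1(\RR_i/\RR_{i-1})$ used in the classical fact recalled just above the corollary. The computation $\phi(c_1(L)) = 1 - [L^\vee]$ from that lemma's proof shows that $\phi(x_i) = 1 - [\RR_i/\RR_{i-1}]$ is precisely the image under $\phi$ of $-c_1(\RR_i/\RR_{i-1})$, so the two conventions agree. The cited identity then gives $\ss_w(x) = [X_{w_0w}]$ in $A^*(\Flag(V))$, and by the very definition of $\phi$ on cycles, $\phi([X_{w_0w}]) = [\OO_{X_{w_0w}}]$ in $\gr^{\ell(w)} {\rm K}(\Flag(V))$, as required.

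The step I expect to need the most care is not any deep argument but bookkeeping: confirming that the substitution conventions in Lemma~\ref{lemma:schubcalc} (which were set up to convert a K-theoretic Euler characteristic into a Schubert polynomial) line up on the nose with the Chow-theoretic convention $x_i = -c_1(\RR_i/\RR_{i-1})$ under which $\ss_w$ represents $[X_{w_0w}]$. Once this compatibility is spelled out via $\phi(c_1(L)) = 1 - [L^\vee]$, the remainder is essentially a one-line specialization followed by an application of $\phi$ to a cycle class.
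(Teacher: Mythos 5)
Your proposal is correct and follows the same route as the paper: the paper's proof is the one-line observation that both sides equal $\phi(\ss_w(x,0))$ with $x_i = -c_1(\RR_i/\RR_{i-1})$, obtained by combining Lemma~\ref{lemma:schubcalc} (with the $y_j$ vanishing because the quotient flag is trivial) with the classical fact that $\ss_w(x)$ represents $[X_{w_0w}]$. Your extra care about matching the sign conventions via $\phi(c_1(L)\cap[X]) = (1-[L^\vee])_1$ is exactly the bookkeeping the paper leaves implicit.
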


\begin{proof} Both quantities agree with $\phi(\ss_w(x,0))$ where $x_i
  = -c_1(\RR_i / \RR_{i-1})$ for $i=1,\dots,n$.
\end{proof}

\subsection{Generic acyclicity of Schubert complexes.}

\begin{theorem} \label{acyclicschubert} Let $A = K[\partial_{i,j}]$ be
  a polynomial ring over a field $K$, and let $\partial \colon F_0 \to
  F_1$ be a generic map of variables between two free $A$-modules.
  \begin{compactenum}[\rm (a)]
  \item \label{acyclicitem} The Schubert complex $\Schub_w(\partial)$
    is acyclic, and resolves a Cohen--Macaulay module $M$ of
    codimension $\ell(w)$ supported in $I_{w^{-1}}(\partial) \subseteq
    A$.
  \item \label{localitem} The restriction of $M$ to $X(w^{-1})$ is a
    line bundle outside of a certain codimension 2 subset.
  \item \label{integeritem} The Schubert complex defined over the
    integers is acyclic.
  \end{compactenum}
\end{theorem}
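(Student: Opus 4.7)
The plan is to verify the Peskine--Szpiro acyclicity lemma for the complex $\Schub_w(\partial)$ over the polynomial ring $A$, then read off the remaining assertions. By Proposition~\ref{complexterms}, $\Schub_w(\partial)$ is concentrated in homological degrees $0,1,\dots,\ell(w)$, and the terms are free. Since $A$ is Cohen--Macaulay, the depth condition on each term is automatic, so the acyclicity lemma reduces (a) to one geometric input: the support of each positive-degree homology lies in $V(I_{w^{-1}}(\partial))$, which has codimension $\ell(w)$ by Theorem~\ref{theorem:matrixschubert}\eqref{item:schubertprime}. Granting this, acyclicity is immediate. The Auslander--Buchsbaum formula then gives $\depth M = \dim A - \ell(w) = \dim A/I_{w^{-1}}(\partial)$, using the Cohen--Macaulayness of matrix Schubert varieties (which follows from the Cohen--Macaulayness of Schubert varieties in the complete flag variety via the product decomposition referenced in the proof of Theorem~\ref{theorem:matrixschubert}\eqref{item:normalCM}), so $M$ is Cohen--Macaulay.

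The core step is generic exactness. At a point $p \in \Spec A \setminus V(I_{w^{-1}}(\partial))$, some southeast-corner minor of $\partial$ is invertible (Theorem~\ref{theorem:matrixschubert}\eqref{item:southeastgens}). After localizing at $p$ and applying a flag-preserving change of basis, one can arrange that a submatrix of $\partial$ restricts to a flagged isomorphism on a subflag. Via Lemma~\ref{lemma:tensoridentity} together with the filtration of Proposition~\ref{complexfiltration}, the localized Schubert complex admits a filtration whose associated graded pieces involve, as a tensor factor, a Schubert complex of a flagged isomorphism for a permutation $\neq 1$; such a complex is exact by the corollary to Proposition~\ref{complexfiltration}. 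An induction on $\ell(w)$, descending through the maximal transition of Proposition~\ref{complexfiltration} (whose constituents $\psi_t$ and $v$ have smaller length or smaller index), then propagates exactness through the filtration and closes the argument.

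For (b), one restricts to the open stratum $U \subseteq X(w^{-1})$ on which the rank function of $\partial$ is exactly $r_{w^{-1}}$; the complement $X(w^{-1}) \setminus U$ has codimension $\ge 2$ in $X(w^{-1})$ because $X(w^{-1})$ is normal (Theorem~\ref{theorem:matrixschubert}\eqref{item:normalCM}). On $U$, one puts $\partial$ into a canonical flag-normal form and computes $M$ explicitly using Proposition~\ref{complexterms}: all summands $\Schub_v(F_0) \otimes \Schub_{wv^{-1}}(F_1)$ with $v \neq 1$ are killed by the (now surjective) pieces of $\partial$, leaving only the $v=1$ summand, which contributes rank one. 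For (c), the Schubert complex is defined over $\Z$ and base changes to the universal complex over $K[\partial_{i,j}]$ for any field $K$; part (a) gives fiberwise acyclicity at the generic point of $\Spec \Z$ and at each closed point, and the standard local flatness criterion applied to the homology modules of the universal complex over $\Z[\partial_{i,j}]$ upgrades this fiberwise vanishing to vanishing over $\Z$.

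The main obstacle is the generic exactness step. The filtration of Proposition~\ref{complexfiltration} is only $\b$-equivariant, not invariant under arbitrary flag-preserving base changes, so one must be careful that the ``block of invertible entries'' identified at a generic non-vanishing point can be matched with the pair $(\alpha,\beta)$ chosen in the maximal transition of the inductive step. Making this matching precise requires tracking the interaction between southeast corners of $D(w^{-1})$ and the pair $(\alpha,\beta)$ maximal for $w$, which is the combinatorial heart of the argument.
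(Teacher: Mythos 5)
There is a genuine gap, and it sits exactly where you locate ``the main obstacle.'' Your reduction of (a) to generic exactness (exactness of $\Schub_w(\partial)_p$ for all $p \notin V(I_{w^{-1}}(\partial))$, followed by Buchsbaum--Eisenbud/Peskine--Szpiro) is a reasonable framing, but the generic exactness step \emph{is} the theorem, and your sketch of it does not go through. The filtration of Proposition~\ref{complexfiltration} carries a homological shift: $C/C' \cong \Schub_v(\partial)[-1]\otimes F_0^\alpha/F_0^{\alpha-1}$. Consequently the long exact sequence only yields $\Hs_i(C)=0$ for $i>1$ together with an injection $\Hs_1(C)\hookrightarrow \Hs_0(\Schub_v(\partial))\otimes F_0^\alpha/F_0^{\alpha-1}$, and since $V(I_{v^{-1}})\supseteq V(I_{w^{-1}})$, localizing at $p\notin V(I_{w^{-1}})$ does not kill $\Hs_0(\Schub_v(\partial))_p$. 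So induction through the maximal transition does not ``propagate exactness''; this is precisely the difficulty the paper singles out. Moreover, Lemma~\ref{lemma:tensoridentity} concerns permutations whose diagrams occupy disjoint rows and columns and does not produce, after localization at a point where one minor is invertible, a filtration whose graded pieces contain a Schubert complex of a flagged isomorphism as a tensor factor; no result in the paper supports that step. The paper supplies the missing input by entirely different means: it transports the complex to the full flag variety, uses Lemma~\ref{lemma:schubcalc} and Corollary~\ref{corollary:ktheory} to compute $[\CC]=[\Hs_0(\CC)]-[\Hs_1(\CC)]$ in $\gr^{\ell(w)}{\rm K}$ as $[\OO_{X_{w_0w}}]$, bounds $\length \Hs_0(\CC)_P\le 1$ via the filtration, deduces that $\Hs_1(\CC)$ vanishes at the generic point of $X_{w_0w}$, pins down the support of both homologies by an explicit computation of minors annihilating $\Hs_0(C)$ (Step 3), and only then invokes Buchsbaum--Eisenbud. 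Without a substitute for the K-theoretic (or some equivalent multiplicity) computation, your induction does not close.

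Two smaller points. For (b), the complement of the exact-rank stratum in $X(w^{-1})$ is a union of smaller matrix Schubert varieties and in general has codimension $1$, not $\ge 2$; normality does not improve this. The paper instead proves $M$ is Cohen--Macaulay with generic rank one and uses normality only to make the codimension-one local rings DVRs, whence $M_Q$ is free of rank one by Auslander--Buchsbaum. Your argument for (c) is essentially the standard one and is fine once (a) is established over every field.
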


Before we begin the proof, let us outline the strategy. The main idea
is to use the filtration given by Proposition~\ref{complexfiltration}
and work by induction. The main difficulty is the fact that there is a
homological shift in the filtration, which only allows one to conclude
that $\Hs_i(\Schub_w(\partial)) = 0$ for $i>1$ (see
\eqref{crucialsequence}). Hence the class of $C = \Schub_w(\partial)$
in an appropriate Grothendieck group is $[\Hs_0(C)] - [\Hs_1(C)]$. To
make this expression more useful, we work with a sheaf version $\CC$
of $C$ over a flag variety, where the K-theory possesses a nice
basis. To get a handle on $[\CC]$, we work with an associated graded
of K-theory and show that the top degree terms of $[\Hs_0(\CC)]$ and
$[\CC]$ agree. Finally, we show that the support of $\Hs_1(\CC)$ must
be a proper closed subset of the support of $\CC$, and we use this to
show that $\Hs_1(\CC)$ must be 0.

\begin{proof} We will prove the statement by induction first on
  $\ell(w)$ and second on the index of $w$ (see
  Section~\ref{section:filtration} for definitions). The case $w = 1$
  is immediate. Using the notation of
  Proposition~\ref{complexfiltration}, it is immediate that $C'$ is
  acyclic by induction and the long exact sequence on homology. Hence
  we only need to analyze the short exact sequence
  \[
  0 \to C' \to C \to \Schub_v(\partial)[-1] \otimes
  F_0^\alpha/F_0^{\alpha-1} \to 0.
  \]
  The induced long exact sequence is
  \begin{align} \label{crucialsequence}
  0 \to \Hs_1(C) \to \Hs_0(\Schub_v(\partial)) \otimes F_0^\alpha /
  F_0^{\alpha-1} \to \Hs_0(C') \to \Hs_0(C) \to 0,
  \end{align}
  so we have to show that $\Hs_1(C) = 0$, and that the support of
  $\Hs_0(C) = M$ is $P = I_{w^{-1}}(\partial)$. We proceed in steps.

  ~

  \noindent {\bf Step 1.} We first show that the length of
  $\Hs_0(C)_P$ restricted to $X(w^{-1})$ is at most 1.

  ~
  
  The short exact sequence
  \[
  0 \to C_k \to C' \to \Schub_v(\partial) \otimes \langle
  e'_{w(\beta)} \rangle \to 0
  \]
  induces the sequence
  \[
  0 \to \Hs_0(C_k) \to \Hs_0(C') \to \Hs_0(\Schub_v(\partial)) \otimes
  \langle e'_{w(\beta)} \rangle \to 0.
  \]
  By induction on the filtration in Proposition~\ref{complexfiltration},
  the support of $\Hs_0(C_k)$ is in the union of the $X(\psi_t^{-1})$,
  and hence does not contain $X(w^{-1})$. So localizing at $P$, we
  get an isomorphism
  \[
  \Hs_0(C')_P \cong \Hs_0(\Schub_v(\partial))_P \otimes \langle
  e'_{w(\beta)} \rangle.
  \]
  So we can restrict this isomorphism to $X(w^{-1})$. Localizing
  \eqref{crucialsequence} at $P$ and then restricting to $X(w^{-1})$,
  we get a surjection
  \[
  \Hs_0(\Schub_v(\partial))_P \otimes \langle e'_{w(\beta)} \rangle
  \to \Hs_0(C)_P \to 0.
  \]
  By induction, \eqref{localitem} gives that the first term has length
  1 over the generic point of $X(w^{-1})$, so $\length(\Hs_0(C)_P) \le
  1$.

  ~

  \noindent {\bf Step 2.} We show that the length of $\Hs_0(C)_P$
  restricted to $X(w^{-1})$ is exactly 1.

  ~

  Without loss of generality, we may extend $\partial$ to a generic
  $2n \times 2n$ matrix by embedding it in the upper left
  corner. Since $w \in \Sigma_n$, these new variables do not affect
  the Schubert complex when we interpret $w$ as a permutation of
  $\Sigma_{2n}$ by having it fix $\{n+1,\dots,2n\}$, so we will refer
  to them as {\bf irrelevant variables}. Now consider the Schubert
  complex $\CC$ on the complete flag variety $Z$ of a vector space of
  dimension $2n$, where the even flag is given by the tautological
  flag of vector bundles $\RR_1 \subset \RR_2 \subset \cdots \subset
  \RR_n$, and the odd flag is given by the trivial vector bundles $V_i
  = \langle e_1, \dots, e_i \rangle$. We identify $Z$ with a quotient
  $\GL(V_{2n})/B$. Restricting to the unique open $B$-orbit $\Omega =
  \Omega_{w_0}$ of $Z$ (which is an affine space), we return to the
  current situation with some of the irrelevant variables of
  $\partial$ specialized to 1 and some specialized to 0. So to finish
  this step, we only need to show that $\length(\Hs_0(\CC)_P) = 1$.

  Let $w_0 \in \Sigma_{2n}$ be the long word. Identify $V_i$ over
  $\Omega$ with $F_1^{-2n-1+i}$. Then the intersection $X_{w_0w} \cap
  \Omega$ is defined by the ideal $I_{w^{-1}}(\partial)$:
  \begin{align*}
    \dim(W_p \cap V_q) \ge r_{w_0w}(p,q) &\iff \dim(W_p \cap
    F_1^{-2n-1+q}) \ge r_{w_0w}(p,q)\\
    &\iff \rank(W_p \to F_1/F_1^{-2n-1+q}) \le p - r_{w_0w}(p,q)\\
    &\iff \rank(W_p \to F_1/F_1^{-1-q}) \le p - r_{w_0w}(p,2n-q) =
    r_w(p,q),
  \end{align*}
  and the map $W_p \to F_1/F_1^{-1-q}$ is given by the upper left $q
  \times p$ submatrix of $\partial$. 

  From the earlier discussion, $[\CC] = [\Hs_0(\CC)] -
  [\Hs_1(\CC)]$. By Corollary~\ref{corollary:ktheory}, the top
  dimension term of $[\CC]$ is $[\OO_{X_{w_0w}}]$. So
  $\length(\Hs_0(\CC)_P) - \length(\Hs_1(\CC)_P) = 1$. We showed above
  that the first length is at most 1, which means that it must be 1,
  and the stalk of $\Hs_1(\CC)$ at the generic point of $X_{w_0w}$
  must be 0.

  ~

  \noindent {\bf Step 3.} The annihilator of $\Hs_0(C)$ properly
  contains $I_{v^{-1}}(\partial)$.

  ~

  We have that $D(w) = D(v) \cup \{(\alpha,w(\beta))\}$, and
  $(\alpha,w(\beta))$ is a southeast corner of $D(w)$: no boxes of
  $D(w)$ lie directly below or to the right of it. This means in
  particular that $I_{w^{-1}}$ is generated by $I_{v^{-1}}$ and the
  $(r+1) \times (r+1)$ minors of the upper $w(\beta) \times \alpha$
  submatrix of $\partial$, where $r = r_w(\alpha,w(\beta))$. We will
  show that a minor in $I_{w^{-1}}$ which is not in $I_{v^{-1}}$
  annihilates $\Hs_0(C)$.

  The module $\Hs_0(C)$ is generated by the BSLs of $D(w)$ that only
  contain marked letters. We have that $w(\beta) - r$ is the number of
  boxes in $D(w)$ in the $\alpha$th row. Let $J = \{ w(\beta) - r,
  \dots, w(\beta) \}$ and let $I$ be an $(r+1)$-subset of $\{1, \dots,
  \alpha\}$. Set $M_{J,I}$ to be the minor of $\partial$ consisting of
  the rows indexed by $J$ and the columns indexed by $I$. We will show
  that $M_{J,I}$ annihilates $\Hs_0(C)$.

  Given a label $j$, and a labeling $T$ of the first $\alpha-1$ rows
  of $D(w)$, let $T(j)$ be the labeling of $D(w)$ that agrees with $T$
  for the first $\alpha-1$ rows, and in which the $i$th box in the
  $\alpha$th row (going from left to right) has the label $i'$ for
  $i=1,\dots, w(\beta)-r-1$, and the box $(\alpha, w(\beta))$ has the
  label $j$. Let $d \colon C_1 \to C_0$ denote the differential. Then
  $d(T(j)) = \sum_{k=1}^{w(\beta)} \partial_{k,j} T(k')$. Note that
  $T(k') = 0$ whenever $1 \le k < w(\beta)-r$ since in this case the
  label $k'$ appears in the bottom row twice. Since $\alpha > r$, the
  $\alpha$ equations
  \begin{align*} 
    \sum_{k = w(\beta)-r}^{w(\beta)} \partial_{k,j} T(k') = 0 \text{
      for } j=1,\dots,\alpha
  \end{align*}
  in $\Hs_0(C)$ show that $M_{J,I}$ annihilates $T(k')$ for $1 \le k
  \le w(\beta)$.

  It remains to show that $M_{J,I}$ annihilates the elements $T$ where
  the labels in the first $w(\beta) - r - 1$ boxes of the $\alpha$th
  row of $T$ are allowed to take values in $\{(w(\beta)-r)', \dots,
  w(\beta)'\}$. It is enough to show how to vary the entries one box
  at a time by decreasing their values (remembering that $i' < j'$ if
  $i > j$). So fix a column index $c$ which contains the $i$th box in
  row $\alpha$ and choose $j > i$. Let $T_j$ denote the labeling
  obtained from $T$ by changing the label in $(\alpha, c)$ from $i'$
  to $j'$. Let $X \in \b$ be the matrix which sends the basis vector
  $e'_i$ to $e'_j$ and kills all other basis vectors. Then $X \cdot T$
  is equal to $T_j$ plus other terms whose labels in the $\alpha$th
  row are the same as those of $T$, and hence are annihilated by
  $M_{J,I}$. Since the actions of $\b$ and $A$ commute with one
  another, we conclude that $M_{J,I}$ annihilates $T_j$.

  ~

  \noindent {\bf Step 4.} We show that $\Hs_1(C) = 0$.

  ~

  By examining different open affine charts of $Z$, Step 3 shows that
  the support of $\Hs_0(\CC)$ is a proper subset of $X_{w_0v}$. The
  argument in Step 2 implies that the same is true for $\Hs_1(\CC)$
  since the structure sheaves of the Schubert varieties form a basis
  for ${\rm K}(Z)$. So the codimension of the support of $\Hs_1(\CC)$
  is at least $\ell(w)$. Name the differentials in the complex $d_i
  \colon \CC_i \to \CC_{i-1}$. Restrict to an open affine set. Let
  $r_i$ be the rank of $d_i$, and set $I(d_i)$ to be the ideal
  generated by the $r_i \times r_i$ minors of $d_i$. Let $Q$ be a
  prime ideal which does not contain $\sqrt{I(d_1)} =
  \ann(\Hs_0(\CC))$. Then $\Hs_0(\CC)_Q = 0$, which makes the
  localization $(d_1)_Q \colon (\CC_1)_Q \to (\CC_0)_Q$ a split
  surjection. Let $\CC'_1$ be the quotient of a splitting of
  $(d_1)_Q$, so that we have a free resolution
  \[
  0 \to (\CC_{\ell(w)})_Q \to \cdots \to (\CC_2)_Q \to \CC'_1
  \]
  of $\Hs_1(\CC)_Q$. Hence the projective dimension of $\Hs_1(\CC)_Q$
  is at most $\ell(w)-1$. In general, localizing can only increase the
  codimension of a module (if we interpret the codimension of 0 to be
  $\infty$), so $\codim \Hs_1(\CC)_Q \ge \ell(w)$. This is also equal
  to the depth of its annihilator since $Z$ is nonsingular. So the
  inequality 
  \[
  \operatorname{proj. dim} \Hs_1(\CC)_Q < \depth \ann \Hs_1(\CC)_Q
  \]
  contradicts \cite[Corollary 18.5]{eisenbud} unless $\Hs_1(\CC)_Q =
  0$. This implies that $\sqrt{I(d_2)_Q}$ is the unit ideal, which
  means that $\sqrt{I(d_2)} \nsubseteq Q$. Hence we conclude that any
  prime ideal which contains $\sqrt{I(d_2)}$ also contains
  $\sqrt{I(d_1)}$. Since a radical ideal is the intersection of the
  prime ideals containing it, we conclude that $\sqrt{I(d_1)}
  \subseteq \sqrt{I(d_2)}$. We also get the inclusions
  \[
  \sqrt{I(d_2)} \subseteq \sqrt{I(d_3)} \subseteq \cdots
  \subseteq \sqrt{I(d_\ell(w))}
  \]
  since the rest of the homology of $\CC$ vanishes \cite[Corollary
  20.12]{eisenbud}, so $\depth I(d_i) \ge \depth I(d_1) \ge \ell(w)$
  for all $i$. We conclude the acyclicity of $\CC$ using the
  Buchsbaum--Eisenbud criterion \cite[Theorem 20.9]{eisenbud} (the
  complex is acyclic at the generic point, and the rank of a map over
  an integral domain stays the same upon passing to its field of
  fractions, so the rank conditions of this criterion are satisfied).

  ~

  \noindent {\bf Step 5.} We show that the restriction of $M =
  \Hs_0(C)$ to $X(w^{-1})$ is a line bundle, and that its support is
  exactly $X(w^{-1})$.

  ~

  Since the projective dimension of $M$ is 1 more than the projective
  dimension of $\Hs_0(\Schub_v(\partial))$, the codimension of its
  support can increase by at most 1 by the Auslander--Buchsbaum
  formula \cite[Theorem 19.9]{eisenbud}.  Thus if we can show that $P$
  is contained in the annihilator of $M$, then it must be equal to its
  annihilator. We have already done this by showing that the stalk of
  $M$ at the generic point of $X(w^{-1})$ is nonzero. Thus the
  codimension and projective dimension of $M$ coincide, which means
  that it is Cohen--Macaulay by the Auslander--Buchsbaum formula. So
  \eqref{acyclicitem} is proven.

  Let $Q$ be the prime ideal associated with a codimension 1
  subvariety of $X(w^{-1})$. To prove \eqref{localitem}, we only need
  to show that $M_Q$ is generated by 1 element. Since $X(w^{-1})$ is
  normal (Theorem \ref{theorem:matrixschubert}\eqref{item:normalCM}),
  the local ring $R = \OO_{X(v^{-1}),Q}$ is a discrete valuation ring,
  and hence regular. Furthermore, we have established already that $M$
  is Cohen--Macaulay, so $M_Q$ is a free $R$-module by the
  Auslander--Buchsbaum formula. So $M$ is free in some open
  neighborhood around $Q$. Since $M$ is generated by a single element
  generically (after further localization), we conclude that $M_Q$
  must also be generated by 1 element.

  Now \eqref{integeritem} follows since we have shown acyclicity over
  an arbitrary field.
\end{proof}

\begin{corollary} \label{acyclicgeometric} Let $X$ be an
  equidimensional Cohen--Macaulay scheme, and let $\partial \colon E \to
  F$ be a map of vector bundles on $X$. Let $E_1 \subset \cdots
  \subset E_n = E$ and $F^{-n} \subset \cdots \subset F^{-1} = F$ be
  split flags of subbundles. Let $w \in \Sigma_n$ be a permutation,
  and define the degeneracy locus
  \[
  D_w(\partial) = \{ x \in X \mid \rank(\partial_x \colon E_p(x) \to
  F/F^{-q-1}(x)) \le r_w(p,q) \},
  \] 
  where the ideal sheaf of $D_w(\partial)$ is locally generated by the
  minors given by the rank conditions. Suppose that $D_w(\partial)$
  has codimension $\ell(w)$.
  \begin{compactenum}[\rm (a)]
  \item \label{acyclicitem} The Schubert complex $\Schub_w(\partial)$
    is acyclic, and the support of its cokernel $\LL$ is
    $D_w(\partial)$.
  \item \label{CMitem} The degeneracy locus $D_w(\partial)$ is
    Cohen--Macaulay.
  \item \label{linebundleitem} The restriction of $\LL$ to
    $D_w(\partial)$ is a line bundle outside of a certain codimension
    2 subset.
  \end{compactenum}
\end{corollary}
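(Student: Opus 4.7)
The plan is to reduce to the universal matrix case of Theorem~\ref{acyclicschubert} via generic perfection. All three claims are local on $X$, so I may assume $X = \Spec R$ is affine with $E$, $F$ free and bases $e_1,\dots,e_n$, $e_1',\dots,e_n'$ chosen compatibly with the flags. Then $\partial$ is represented by an $n \times n$ matrix over $R$, and substituting the universal variables by their images gives a ring homomorphism $\varphi \colon A = \Z[\partial_{i,j}] \to R$ along which the universal Schubert complex specializes to $\Schub_w(\partial)$. By the hypothesis on the ideal sheaf of $D_w(\partial)$, this locus is cut out by the extended ideal $\varphi(I_{w^{-1}}(\partial))R$.

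For part (a), Theorem~\ref{acyclicschubert} provides a length-$\ell(w)$ free $A$-resolution of a Cohen--Macaulay module of codimension $\ell(w)$. The codimension hypothesis on $D_w(\partial)$ combined with the Cohen--Macaulayness of $R$ forces the extended ideal to have grade $\ell(w)$ in $R$, and persistence of perfection under specialization (see, e.g., \cite[Chapter 20]{eisenbud}) then shows that $\Schub_w(\partial)$ remains acyclic and that its cokernel $\LL$ is a Cohen--Macaulay $R$-module of codimension $\ell(w)$, with $\supp \LL = D_w(\partial)$. For part (b), Cohen--Macaulayness of $\LL$ alone does not imply Cohen--Macaulayness of $D_w(\partial)$ as a scheme, so I apply the same principle to $\OO_{X(w^{-1})}$ instead: by Theorem~\ref{theorem:matrixschubert} (and the classical Cohen--Macaulayness of Schubert varieties invoked in its proof) this sheaf is a perfect $A$-module of codimension $\ell(w)$, and specialization preserving codimension forces $\OO_{D_w(\partial)}$ to be a perfect, hence Cohen--Macaulay, $\OO_X$-module. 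For part (c), the Fitting ideals of $\LL$ are computed by minors of the presenting matrix and so commute with base change along $\varphi$; the universal codimension-$2$ Fitting locus where $\LL$ fails to be a line bundle pulls back to a closed subset of $D_w(\partial)$ whose codimension is at least $2$ once we combine this with the Cohen--Macaulayness from part (b).

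The main obstacle is the grade-preservation step needed at each application of generic perfection: locally on $X$ one must verify that the extended Schubert determinantal ideal has grade exactly $\ell(w)$ in $R$. This is forced by the codimension hypothesis together with Cohen--Macaulayness of $X$, which converts local codimension into grade. Once this local input is available, parts (a)--(c) follow essentially formally from the universal statements of Theorem~\ref{acyclicschubert} and Theorem~\ref{theorem:matrixschubert}, with the only additional subtlety being the codimension-$2$ bound in part (c), which I expect to handle by the Fitting-ideal argument sketched above.
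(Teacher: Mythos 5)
Your part (a) is essentially the paper's own argument: the paper also localizes, specializes the generic complex of Theorem~\ref{acyclicschubert} along $\Z[\partial_{i,j}] \to R$, and uses the hypothesis $\codim D_w(\partial) = \ell(w)$ together with Cohen--Macaulayness of $R$ to convert codimension into depth of $I_{w^{-1}}(\partial)$; the only difference is that the paper runs this through the Buchsbaum--Eisenbud criterion \cite[Theorem 20.9]{eisenbud} and the comparison of radicals of ideals of minors of the differentials \cite[Corollary 20.12]{eisenbud}, rather than quoting generic perfection wholesale --- the two are interchangeable here. For (b) you genuinely diverge: the paper deduces (b) directly from the fact that the cokernel is a Cohen--Macaulay module supported on $D_w(\partial)$, whereas you specialize the perfection of $\OO_{X(w^{-1})}$ itself. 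Your route is the more robust one (you correctly observe that a scheme carrying a maximal Cohen--Macaulay module need not be Cohen--Macaulay), but the input it needs --- that $A/I_{w^{-1}}(\partial)$ is perfect over $\Z[\partial_{i,j}]$, i.e., that matrix Schubert varieties are Cohen--Macaulay --- is not part of Theorem~\ref{theorem:matrixschubert} as stated, which gives only primality, codimension, and normality; you are importing a classical fact (Fulton, Knutson--Miller) from outside the paper, and should say so explicitly.

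The one genuine gap is in (c). Fitting ideals do commute with base change, so the non-line-bundle locus of $\LL$ is cut out by the extension of the universal Fitting ideal; but Cohen--Macaulayness of $D_w(\partial)$ does not prevent the codimension of that extended ideal from dropping below $2$ under specialization. Codimension of an extended ideal is bounded above by Krull's theorem, not below, and the hypothesis $\codim D_w(\partial) = \ell(w)$ controls only $I_{w^{-1}}(\partial)$, not the Fitting ideal: for instance $(x,y) \subset k[x,y]$ has codimension $2$ but extends to the codimension-$1$ ideal $(t)$ under $x, y \mapsto t$. So the final step of your (c) does not close as written and needs a different justification (note that the DVR argument used for Theorem~\ref{acyclicschubert}\eqref{localitem} also does not specialize, since $D_w(\partial)$ need not be normal). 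To be fair, the paper itself disposes of (c) with a bare citation of Theorem~\ref{acyclicschubert}\eqref{localitem}, so you are no worse off than the source; but ``combine with Cohen--Macaulayness from part (b)'' is not a proof of the codimension-$2$ bound.
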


\begin{proof} The statement is local, so we can replace $X$ by $\Spec
  R$ where $R$ is a local Cohen--Macaulay ring. In this case,
  $D_w(\partial)$ is defined by the ideal $I_{w^{-1}}(\partial)$. Let
  $\partial^g$ denote the generic matrix, and let $(C_\bullet,
  d_\bullet)$ be the complex over $\Z[\partial^g_{i,j}]$ as in
  Theorem~\ref{acyclicschubert}. We get $(C'_\bullet, d'_\bullet) =
  \Schub_w(\partial)$ by specializing the variables $\partial^g_{i,j}$
  to elements of $R$ and base changing to $R$. Let $r_i$ be the rank
  of $d_i$, and let $I(d_i)$ be the ideal generated by the $r_i \times
  r_i$ minors of $d_i$. By \cite[Corollary 20.12]{eisenbud},
  $\sqrt{I(d_1)} = \sqrt{I(d_2)} = \cdots = \sqrt{I(d_{\ell(w)})}$
  since $C$ is acyclic and since $\depth I_{w^{-1}}(\partial^g) =
  \ell(w)$.

  Specializing $\partial$ to elements of $R$, the same equalities hold
  when replacing $d_i$ with $d'_i$. Noting that $I(d'_1) = \ann \coker
  d'_1 \supseteq I_{w^{-1}}(\partial)$, we get that
  \[
  \depth I(d'_1) \ge \depth I_{w^{-1}}(\partial) = \codim
  I_{w^{-1}}(\partial) = \ell(w)
  \]
  by the assumptions that $R$ is Cohen--Macaulay and that
  $D_w(\partial)$ has codimension $\ell(w)$. Hence $\depth I(d'_i) \ge
  \ell(w)$ for $i=1,\dots,\ell(w)$, which means that $C'$ is acyclic
  by the Buchsbaum--Eisenbud acyclicity criterion \cite[Theorem
  20.9]{eisenbud}. Finally, since the length of the Schubert complex
  is $\ell(w)$, we conclude that the depth of the cokernel must be
  $\ell(w)$ by the Auslander--Buchsbaum formula. So in fact $\ann
  \coker d'_1 = I_{w^{-1}}(\partial)$, which implies that the support
  of the cokernel is $D_w(\partial)$. This establishes
  \eqref{acyclicitem} and \eqref{CMitem}.

  Now \eqref{linebundleitem} follows from
  Theorem~\ref{acyclicschubert}\eqref{localitem}.
\end{proof}

\subsection{Examples.}

\begin{example} \label{eg:simpletranspositions} Let $s_i$ denote the
  simple transposition that switches $i$ and $i+1$. For $w = s_i$, the
  maximal transition \eqref{maxtransition} simplifies to $(\alpha,
  \beta) = (i, i+1)$, $v = 1$, $k=1$, and $\psi_1 = s_{i-1}$. This is
  also evident from the fact that $\ss_{s_i}(x,y) = x_1 + \cdots + x_i
  - y_1 - \cdots - y_i$.

  Let $F_0$ and $F_1$ be vector spaces of dimension $n$ with $n \ge
  i$. Given a map $\partial \colon F_0 \to F_1$ with distinguished
  bases $e_1, \dots, e_n$ and $e'_1, \dots, e'_n$ (coming from a flag
  of $F_0$ and a quotient flag of $F_1$), respectively, the associated
  Schubert complex $\Schub_{s_i}(\partial)$ is obtained from
  $\partial$ by taking the upper left $i \times i$ submatrix of
  $\partial$.

  The filtration of Proposition~\ref{complexfiltration} can be described
  as follows. First, it should look like
  \[
  0 = C_0 \subset C_1 \subset C' \subset C = \Schub_{s_i}(\partial)
  \]
  where $C_1 \cong \Schub_{s_{i-1}}(\partial)$, $C' / C_1 \cong
  F_1^{-i} / F_1^{-i+1}$ and $C / C' \cong (F_0^i /
  F_0^{i-1})[-1]$. 

  Then $C'$ is the subcomplex $\langle e_1, \dots, e_{i-1} \rangle \to
  \langle e'_1, \dots, e'_i \rangle$ of $C$, so the quotient is then
  $F_0^i / F_0^{i-1}$ concentrated in degree 1.  Finally, $C_1$ is the
  subcomplex $\langle e_1, \dots, e_{i-1} \rangle \to \langle e'_1,
  \dots, e'_{i-1} \rangle$ which is isomorphic to
  $\Schub_{s_{i-1}}(\partial)$ and the quotient $C' / C_1$ is
  $F_1^{-i} / F_1^{-i+1}$ as required.
\end{example}

Here is a combinatorial description of the differentials in the
Schubert complex for a flagged isomorphism. We will work with just the
tensor product complex $\bigotimes_{k=1}^{n-1} \D^{r_k(w)}(F)$. Then
the basis elements of its terms are row-strict labelings. The
differential sends such a labeling to the signed sum of all possible
ways to change a single unmarked letter to the corresponding marked
letter. If $T'$ is obtained from $T$ by marking a letter in the $i$th
row, then the sign on $T'$ is $(-1)^n$, where $n$ is the number of
unmarked letters of $T$ in the first $i-1$ rows.

\begin{example} Consider the permutation $w = 1423$. Then $D(w) = \{(2,2),
  (2,3)\}$, and if we use the identity matrix $I$, $\Schub_w(I)$ looks
  like
  \[
  \begin{array}{lll}
    { \tableau[scY]{2,2} }
    \mapsto 
    { \tableau[scY]{2, 2'} }, &
    { \tableau[scY]{2, 2'} } \mapsto 0, &
    { \tableau[scY]{1', 2'} } \mapsto 0,  \\ \\
    { \tableau[scY]{2, 1} } \mapsto { \tableau[scY]{1, 2'} } + {
      \tableau[scY]{2, 1'} }, 
    &  { \tableau[scY]{1, 2'} } \mapsto { \tableau[scY]{1',2'} }, &
    { \tableau[scY]{1', 3'} } \mapsto 0, \\ \\
    { \tableau[scY]{1, 1} } \mapsto 
    { \tableau[scY]{1,1'} }, &
    { \tableau[scY]{2, 1'} } \mapsto -{ \tableau[scY]{1',2'} }, &
    { \tableau[scY]{2', 3'} } \mapsto 0. \\ \\
    &  { \tableau[scY]{1,1'} } \mapsto 0, \\ \\ 
    &  { \tableau[scY]{1,3'} } \mapsto { \tableau[scY]{1',3'} },
    \\ \\ 
    & { \tableau[scY]{2,3'} } \mapsto { \tableau[scY]{2',3'} }, \\ \\
  \end{array}
  \]
  If we use a generic map $e_1 \mapsto ae'_1 + be'_2 + ce'_3$ and $e_2
  \mapsto de'_1 + ee'_2 + fe'_3$ (the images of $e_3$ and $e_4$ are
  irrelevant, and it is also irrelevant to map to $e'_4$) instead,
  then the complex looks like
  \[
  0 \to A^3
  \xrightarrow{\begin{pmatrix}e& b&      0\\
      0& e&      b\\
      d& a&      0\\
      0& d&      a\\
      0& f&      c\\
      f& c&      0
    \end{pmatrix}} A^6
  \xrightarrow{\begin{pmatrix}d& a& {-e}& {-b}& 0&      0\\
      0& 0& {-f}& {-c}& a&      d\\
      {-f}& {-c}& 0& 0& b& e
     \end{pmatrix}}
   A^3 \to M \to 0
  \]
  The cokernel $M$ is Cohen--Macaulay of codimension 2 over $A =
  K[a,b,c,d,e,f]$.
\end{example}

\begin{example} Consider the permutation $w = 2413$. Then $D(w) = \{(1,1),
  (2,1), (2,3) \}$, and if we use the identity matrix $I$,
  $\Schub_w(I)$ looks like (note the negative signs which come from
  the fact that we are working with an image of a tensor product of
  two divided power complexes)
  \[
  \begin{array}{lll} 
    {\tableau[scY]{1|2,\bl ,2}} \mapsto
    {\tableau[scY]{1'|2,\bl ,2}} - 
    {\tableau[scY]{1|2,\bl ,2'}}, & {\tableau[scY]{1|2,\bl ,1}}
    \mapsto {\tableau[scY]{1'|2,\bl ,1}} - {\tableau[scY]{1|2,\bl
        ,1'}}\\ \\
    {\tableau[scY]{1|2,\bl ,1'}} \mapsto {\tableau[scY]{1'|2,\bl
        ,1'}} + {\tableau[scY]{1'|1,\bl ,2'}}, &
    {\tableau[scY]{1|2,\bl ,2'}} \mapsto {\tableau[scY]{1'|2,\bl
        ,2'}}, & {\tableau[scY]{1|2,\bl ,3'}} \mapsto
    {\tableau[scY]{1'|2,\bl ,3'}}\\ \\
    {\tableau[scY]{1'|1,\bl ,1}} \mapsto 
    {\tableau[scY]{1'|1,\bl ,1'}}, & {\tableau[scY]{1'|2,\bl ,2}}
    \mapsto 
    {\tableau[scY]{1'|2,\bl ,2'}}, & {\tableau[scY]{1'|2,\bl ,1}}
    \mapsto {\tableau[scY]{1'|1,\bl ,2'}} +
    {\tableau[scY]{1'|2,\bl ,1'}}
  \end{array}
  \]
  \[
  \begin{array}{lll}
    {\tableau[scY]{1'|1,\bl ,1'}} \mapsto 0, &
    {\tableau[scY]{1'|1,\bl ,2'}} \mapsto
    {\tableau[scY]{1'|1',\bl ,2'}}, &
    {\tableau[scY]{1'|1,\bl ,3'}} \mapsto
    {\tableau[scY]{1'|1',\bl ,3'}}\\ \\
    {\tableau[scY]{1'|2,\bl ,1'}} \mapsto
    -{\tableau[scY]{1'|1', \bl ,2'}}, & {\tableau[scY]{1'|2,\bl
        ,2'}} \mapsto 0, & 
    {\tableau[scY]{1'|2,\bl ,3'}} \mapsto 0\\ \\
    {\tableau[scY]{1'|1',\bl ,2'}} \mapsto 0, &
    {\tableau[scY]{1'|1',\bl ,3'}} \mapsto 0
  \end{array}
  \]
  Using a generic matrix defined by $e_1 \mapsto ae'_1 + be'_2 +
  ce'_3$ and $e_2 \mapsto de'_1 + ee'_2 + fe'_3$ (the other
  coefficients are irrelevant) instead of the identity matrix gives
  the following complex
  \[
  0 \to A^2 \xrightarrow{\begin{pmatrix}{-d}&    {-a}\\
      {-e}&    {-b}\\
      {-f}&    {-c}\\
      0&    {-d}\\
      a&    0\\
      0& a
    \end{pmatrix}} A^6
  \xrightarrow{\begin{pmatrix}0& 0& 0& a& 0&    d\\
      e& {-d}& 0& b& 0&    e\\
      f& 0& {-d}& c& 0&    f\\
      a& 0& 0& 0& d&    a\\
      0& a& 0& 0& e&    b\\
      0& 0& a& 0& f& c
    \end{pmatrix}} A^6
  \xrightarrow{\begin{pmatrix}{-b}& a& 0& {-e}& d& 0\\
      {-c}& 0& a& {-f}& 0& d
    \end{pmatrix}} A^2 \to M \to 0
  \]
  Its cokernel $M$ is Cohen--Macaulay of codimension 3 over $A =
  K[a,b,c,d,e,f]$.
\end{example}

\section{Degeneracy loci.} \label{locisection}

\subsection{A formula of Fulton.} \label{fultonsection}

Suppose we are given a map $\partial \colon E \to F$ of vector bundles
of rank $n$ on a scheme $X$, together with a flag of subbundles $E_1
\subset E_2 \subset \cdots \subset E_n = E$ and a flag of quotient
bundles $F = F_n \surj F_{n-1} \surj \cdots \surj F_1$ such that
$\rank E_i = \rank F_i = i$. We assume that the quotients $E_i /
E_{i+1}$ are locally free. For a permutation $w$, define
\[
D_w(\partial) = \{ x \in X \mid \rank (\partial_x \colon E_p(x) \to
F_q(x)) \le r_w(p,q) \}.
\]
Then $\codim D_w(\partial) \le \ell(w)$. Define Chern classes $x_i =
-c_1(E_i / E_{i-1})$ and $y_i = -c_1(\ker(F_i \surj F_{i-1}))$.

\begin{theorem}[Fulton] \label{fultonformula} Suppose that $X$ is an
  equidimensional Cohen--Macaulay scheme of finite type over a field
  $K$ and $D_w(\partial)$ has codimension $\ell(w)$. Then the identity
  \begin{align*} 
  [D_w(\partial)] = \ss_w(x,y) \cap [X]
  \end{align*}
  holds in the Chow group $\mathrm{A}_{\dim(D_w(\partial))}(X)$.
\end{theorem}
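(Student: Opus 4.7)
The plan is to extract Fulton's formula directly from the K-theoretic Euler characteristic of the Schubert complex. First, Corollary~\ref{acyclicgeometric} applies under the codimension hypothesis: the Schubert complex $\Schub_w(\partial)$ is acyclic, its cokernel $\LL$ is Cohen--Macaulay of codimension $\ell(w)$ with support $D_w(\partial)$, and the restriction of $\LL$ to $D_w(\partial)$ is a line bundle outside a certain codimension $2$ subset. Consequently, at every generic point $\eta_V$ of an irreducible component $V$ of $D_w(\partial)$, the length $m_V(\LL)$ coincides with the length of $\OO_{D_w(\partial),\eta_V}$ as an $\OO_{X,\eta_V}$-module, so $Z^{\ell(w)}(\LL) = [D_w(\partial)]$ as cycle classes in $\mathrm{A}_{\dim X - \ell(w)}(X)$.

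Second, I would compare two expressions in K-theory. Acyclicity gives $[\LL] = [\Schub_w(\partial)]$ in ${\rm K}(X)$, and since $\LL$ is supported in codimension $\ell(w)$, both classes lie in $F^{\ell(w)}{\rm K}(X)$. Passing to the associated graded, the multiplicity formula \eqref{eqn:multiplicity} gives $[\LL]_{\ell(w)} = \phi(Z^{\ell(w)}(\LL)) = \phi([D_w(\partial)])$, while Lemma~\ref{lemma:schubcalc} gives $[\Schub_w(\partial)]_{\ell(w)} = \phi(\ss_w(x,y))$. Therefore $\phi([D_w(\partial)]) = \phi(\ss_w(x,y)\cap [X])$ in $\gr^{\ell(w)}{\rm K}(X)$.

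Third, I would upgrade this to an identity in the Chow group itself by a universality argument: realize the flagged map $(E_\bullet, F_\bullet, \partial)$ on $X$ as a pullback from a universal family defined on a smooth flag bundle whose base admits an affine cell decomposition (in the spirit of Corollary~\ref{corollary:ktheory}), so that $\phi$ is an integral isomorphism in the relevant degree. The compatibility of the Schubert complex, the degeneracy locus, and the polynomial $\ss_w(x,y)$ with pullback then transfers the identity back to $X$. The principal obstacle is precisely this last step: one must verify that the universal degeneracy locus has the expected codimension (so that the argument can be run there via Corollary~\ref{acyclicgeometric}) and that the integral comparison between Chow groups and graded K-theory is available on the chosen smooth base, since Lemma~\ref{lemma:schubcalc} is an identity in $\gr{\rm K}$ rather than in the Chow group directly.
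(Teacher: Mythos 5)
Your first two steps reproduce the paper's argument exactly: Corollary~\ref{acyclicgeometric} gives acyclicity and the generic line-bundle structure of $\LL$, so that $Z^{\ell(w)}(\LL) = [D_w(\partial)]$ as cycles, and Lemma~\ref{lemma:schubcalc} identifies $[\Schub_w(\partial)]_{\ell(w)}$ with $\phi(\ss_w(x,y))$. The divergence --- and the gap --- is in your third step. To pass from $\phi([D_w(\partial)]) = \phi(\ss_w(x,y)\cap[X])$ back to an identity in the Chow group, you propose to realize $(E_\bullet,F_\bullet,\partial)$ as a pullback from a universal family over a smooth base admitting an affine cell decomposition, where $\phi$ would be an integral isomorphism. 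No such base exists in general: the universal space parametrizing flagged maps of the given bundles is a $\hom$- or flag-bundle over $X$ itself, so its Chow groups are no more accessible than those of $X$, and for arbitrary $X$ the bundles $E$ and $F$ are not pulled back from any cellular variety. Corollary~\ref{corollary:ktheory} lives on $\Flag(V)$ for a single vector space $V$, which is not the universal situation for this theorem.

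The paper never inverts $\phi$ globally; it applies Lemma~\ref{lemma:pragacz} instead. Since $\supp \Schub_w(\partial) \subseteq D_w(\partial)$, the class $[\Schub_w(\partial)]$ dies on $X\setminus D_w(\partial)$, and injectivity of $\phi_\Q$ there forces $j_{\rm A}^*(\ss_w(x,y)\cap[X])=0$; the localization exact sequence for Chow groups then writes $\ss_w(x,y)\cap[X]$ as a rational combination of the components of $D_w(\partial)$, and your Step 1 multiplicity computation pins the coefficients down. So you have all the ingredients --- you need only replace the unavailable integral inversion of $\phi$ by this localization argument. Note also that your Steps 2--3 already presuppose $X$ smooth, whereas the theorem assumes only Cohen--Macaulay; the reduction to the smooth quasi-projective case is a separate matter, which the paper defers to Fulton's universal construction and Chow's lemma (\cite[\S 8]{flags}). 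That is where a ``universal family'' legitimately enters the proof, not in the integrality question, and conflating the two leaves both unresolved.
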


See \cite[\S 8]{flags} for a more general statement which does not
enforce a codimension requirement on $D_w(\partial)$ or assume that
$X$ is Cohen--Macaulay. In order to state the connection between the
Schubert complex and Fulton's formula, we will need the following
lemma which was observed in \cite[Appendix 6]{pragacz}.

\begin{lemma} \label{lemma:pragacz} Let $X$ be an equidimensional
  smooth scheme of finite type over a field $K$, and let $D$ be an
  irreducible closed subscheme of $X$ of codimension $k$. Let
  $C_\bullet$ be a finite complex of vector bundles on $X$ and let
  $\alpha \in {\rm A}^{k}(X)$. If
  \[
  \supp C_\bullet = X \setminus \{ x \in X \mid (C_\bullet)|_x {\rm\
    is\ an\ exact\ complex} \}
  \]
  is contained in $D$, and $\phi(\alpha) = [C_\bullet]_{k}$, then
  $c[D] = \alpha$ for some $c \in \Q$.
\end{lemma}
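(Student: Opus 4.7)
The plan is to exploit the fact that $\gr^k\mathrm{K}(X)$, viewed through the multiplicity formula \eqref{eqn:multiplicity}, only ``remembers'' the codimension-$k$ components of the support of a coherent sheaf, and then to use that $\phi\otimes\Q$ is an isomorphism to transfer the conclusion back into the Chow ring.

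First, I would observe that $\supp C_\bullet \subseteq D$ implies $\supp \Hs_i(C_\bullet)\subseteq D$ for every $i$, because if $(C_\bullet)|_x$ is exact then all homology sheaves vanish at $x$. Since $D$ has codimension $k$, each $\Hs_i(C_\bullet)$ is a coherent sheaf of codimension at least $k$, so $[C_\bullet]=\sum_i(-1)^i[\Hs_i(C_\bullet)]$ lies in $F^k\mathrm{K}(X)$, and moreover the class $[C_\bullet]_k\in\gr^k\mathrm{K}(X)$ equals $\sum_i(-1)^i\,\phi(Z^k(\Hs_i(C_\bullet)))$ by \eqref{eqn:multiplicity}.

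Next, I would note that $D$ is the only irreducible codimension-$k$ subvariety of $X$ that can appear in any $Z^k(\Hs_i(C_\bullet))$: any other irreducible component of $\supp\Hs_i(C_\bullet)$ is contained in $D$ and hence has codimension strictly larger than $k$ (since $D$ is irreducible of codimension exactly $k$). Consequently
\[
Z^k(\Hs_i(C_\bullet)) \;=\; m_D(\Hs_i(C_\bullet))\,[D] \quad\text{in }\mathrm{A}^k(X),
\]
and summing with signs gives
\[
[C_\bullet]_k \;=\; \phi\!\left(\Big(\sum_i (-1)^i m_D(\Hs_i(C_\bullet))\Big)[D]\right) \;=\; \phi(m[D])
\]
for some integer $m$.

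Finally, I would combine this with the hypothesis $\phi(\alpha)=[C_\bullet]_k$ to obtain $\phi(\alpha - m[D]) = 0$ in $\gr^k\mathrm{K}(X)$. Since $\phi\otimes\Q\colon \mathrm{A}^*(X)\otimes\Q \to \gr\mathrm{K}(X)\otimes\Q$ is an isomorphism (the fact invoked just after \eqref{eqn:multiplicity}), it follows that $\alpha = m[D]$ in $\mathrm{A}^k(X)\otimes\Q$, i.e.\ $\alpha = c[D]$ with $c=m\in\Q$. There is no serious obstacle here; the only subtlety is that one must pass to rational coefficients to invert $\phi$, which is precisely why the conclusion is stated with $c\in\Q$ rather than $c\in\Z$.
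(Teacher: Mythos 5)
Your argument is correct, but it is not the one the paper uses. The paper's proof runs through the localization exact sequence ${\rm A}^*(D)_\Q \to {\rm A}^*(X)_\Q \to {\rm A}^*(X\setminus D)_\Q \to 0$: from $\supp C_\bullet \subseteq D$ one gets $j^*_{\rm K}([C_\bullet])=0$, hence $j^*_{\rm A}(\alpha)=0$ because $\phi_\Q$ is an isomorphism over $X\setminus D$, hence $\alpha = i_*(\beta)$ with $\beta \in {\rm A}^0(D)_\Q = \Q[D]$ by irreducibility of $D$. You instead compute $[C_\bullet]_k$ head-on: each $\Hs_i(C_\bullet)$ is supported in $D$, so by \eqref{eqn:multiplicity} the only codimension-$k$ cycle that can occur in any $Z^k(\Hs_i(C_\bullet))$ is $D$ itself, giving $[C_\bullet]_k = \phi(m[D])$ with $m = \sum_i(-1)^i m_D(\Hs_i(C_\bullet)) \in \Z$, and then injectivity of $\phi_\Q$ in degree $k$ finishes. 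Both proofs ultimately rest on the same input (that $\phi$ is a rational isomorphism), but yours is more self-contained — it avoids the localization sequence and the identification ${\rm A}^0(D)\cong\Z$ — and it yields strictly more information: the constant $c$ is exhibited as the alternating sum of generic multiplicities of the homology sheaves along $D$, which is exactly the quantity one evaluates later (via \eqref{eqn:multiplicity} and Corollary~\ref{acyclicgeometric}) to see that $c=1$ in the proof of Theorem~\ref{fultonformula}. The one point to keep straight is that the equality $\alpha = m[D]$ is still only asserted in ${\rm A}^k(X)\otimes\Q$, since $\phi$ is inverted only rationally — which you note correctly.
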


For completeness (and since we have changed notation from
\cite{pragacz}), we will reproduce the proof.

\begin{proof} Let $i \colon D \to X$ and $j \colon X \setminus D \to
  X$ be the inclusions. Let the subscript $(-)_\Q$ denote tensoring
  with $\Q$. Then the diagram (of Abelian groups)
  \[
  \xymatrix{ {\rm A}^*(D)_\Q \ar[r]^-{i_*} & {\rm A}^*(X)_\Q
    \ar[r]^-{j_{\rm A}^*} \ar[d]^-{\phi_\Q} & {\rm A}^*(X \setminus
    D)_\Q \ar[r] \ar[d]^-{\phi_\Q} & 0 \\
    & \gr {\rm K}(X)_\Q \ar[r]^-{j_{\rm K}^*} & \gr {\rm K}(X \setminus
    D)_\Q }
  \]
  commutes by functoriality of $\phi$, and the first row is exact
  \cite[Proposition 1.8]{intersection}. Since $\supp(C_\bullet)
  \subseteq D$, we have $j_{\rm K}^*([C_\bullet]) = 0$, and since
  $\phi_\Q(\alpha) = [C_\bullet]$, we conclude that $j_{\rm
    A}^*(\alpha) = 0$ because $\phi_\Q$ is an isomorphism
  \cite[Example 15.2.16(b)]{intersection}. Since we assumed that
  $\alpha \in {\rm A}^{k}(X)$ we have $\alpha = i_*(\beta)$ for some
  $\beta \in {\rm A}^0(D)_\Q$. But $D$ is irreducible, and hence
  $\beta$ is some rational multiple of $[D]$. 
\end{proof}

We will verify Theorem~\ref{fultonformula} in the case that $X$ is a
smooth quasi-projective variety. The general case can be reduced to
this case using a ``universal construction'' and Chow's lemma (see
\cite[\S 8]{flags}).

\begin{proof}[Proof of Theorem~\ref{fultonformula}] 
  We will use Lemma~\ref{lemma:pragacz} with $D = D_w(\partial)$,
  $C_\bullet = \Schub_w(\partial)$, and $\alpha = \ss_w(x,y)$ using
  the notation from the beginning of this section. We know that $\supp
  C_\bullet \subseteq D$ and that the codimension of $D$ is $\ell(w) =
  \deg \alpha$ by Corollary~\ref{acyclicgeometric}. So in order to
  conclude Theorem~\ref{fultonformula}, we need to check that
  $\phi(\alpha) = [C_\bullet]$, which is the content of
  Lemma~\ref{lemma:schubcalc}. Finally, it remains to show that the
  constant given by Lemma~\ref{lemma:pragacz} is 1. This follows from
  \eqref{eqn:multiplicity} and
  Corollary~\ref{acyclicgeometric}\eqref{linebundleitem}.
\end{proof}

\subsection{Some remarks.}

\begin{remark} 
  The previous constructions do not require that the flags be
  complete, so that one can omit certain subbundles or quotient
  bundles as desired. The appropriate generalization would be to use
  partial flag varieties, but we have omitted such generality to keep
  the notation simpler.
\end{remark}

\begin{remark}
  A permutation $w \in \Sigma_n$ is {\bf Grassmannian} if it has at
  most one descent, i.e., there exists $r$ such that $w(1) < w(2) <
  \cdots < w(r) > w(r+1) < \cdots < w(n)$. Suppose that $w$ is {\bf
    bigrassmannian}, which means that $w$ and $w^{-1}$ are
  Grassmannian permutations. This is equivalent to saying that $D(w)$
  is a rectangle. In this case, the double Schubert polynomial
  $\ss_w(x,y)$ is a multi-Schur function for the partition $D(w)$ (one
  can use \cite[Proposition 2.6.8]{manivel} combined with
  \eqref{induction}). The degeneracy locus $D_w(\partial)$ can then be
  described by a single rank condition of the map $\partial \colon E
  \to F$, so the degeneracy locus formula of Fulton specializes to the
  Thom--Porteous formula mentioned in the introduction. So in
  principle, the action of $\b$ on $\Schub_w(\partial)$ should extend
  to an action of a general linear superalgebra, but it is not clear
  why this should be true without appealing to Schur polynomials.
\end{remark}

\begin{remark} The Schubert complex only gives a formula for the
  structure sheaf of the given degeneracy locus in the associated
  graded of K-theory. A formula for the structure sheaf in the actual
  K-theory is given in \cite[Theorem 3]{lascoux} using the so-called
  Grothendieck polynomials, but the formula is not obtained by
  constructing a complex, so it would be interesting to try to
  construct these complexes. The degeneracy loci for bigrassmannian
  permutations are determinantal varieties, and the resolutions in
  characteristic 0 are explained in \cite[\S 6.1]{weyman}. We should
  point out that the terms of the resolutions may change with the
  characteristic, see \cite[\S 6.2]{weyman}.
\end{remark}

\begin{remark}
  We have seen that the modules which are the cokernels of generic
  Schubert complexes have linear minimal free resolutions. These
  modules can be thought of as a ``linear approximation'' to the ideal
  which defines the matrix Schubert varieties, which in general have
  rich and complicated minimal free resolutions. More precisely, we
  have shown that matrix Schubert varieties possess maximal
  Cohen--Macaulay modules with linear resolutions. In general, the
  question of whether or not every graded ring possesses such a module
  is open (see \cite[p.543]{chowforms} for further information).

  Furthermore, such modules can be obtained geometrically, as outlined
  in \cite[Chapter 6, Exercises 34--36]{weyman} for the case of
  generic determinantal varieties and their symmetric and
  skew-symmetric analogues, which we will denote by $D \subset {\bf
    A}^N$. The idea is to find a projective variety $V$ and a
  subbundle $Z \subset V \times {\bf A}^N$ such that the projection $Z
  \to D$ is a desingularization. In each case, one can find a vector
  bundle on $Z$ whose pushforward to ${\bf A}^N$ provides the desired
  module supported on $D$. The proof that its minimal free resolution
  is linear involves some sheaf cohomology calculations.  It would be
  interesting to try to do this for matrix Schubert varieties, which
  are our affine models of Fulton's degeneracy loci. The
  desingularizations of matrix Schubert varieties one might try to use
  could be given by some analogue of Bott--Samelson varieties. The
  problem would then be to find the appropriate vector bundle and do
  the relevant sheaf cohomology calculations. It is the latter part
  that seems to be complicated.
\end{remark}

\small

\noindent Steven V Sam,
Department of Mathematics, 
Massachusetts Institute of Technology, 
Cambridge, MA 02139,\\
{\tt ssam@math.mit.edu}, 
\url{http://math.mit.edu/~ssam/}

\end{document}